\documentclass[reqno]{amsart}

\usepackage[T1]{fontenc} 
\usepackage{amsmath,amssymb,amsthm}
\usepackage{mathrsfs} 
\usepackage{array} 
\usepackage{url} 
\usepackage{listings}
\usepackage{longtable}

\usepackage{breakurl}
\usepackage[breaklinks]{hyperref}
\usepackage{float}
\usepackage[shortlabels]{enumitem}
\restylefloat{table}
\floatstyle{plaintop}
\restylefloat{table}
\usepackage{tikz,tikz-3dplot}
\usetikzlibrary{positioning}
\usepackage{caption}
\captionsetup[table]{skip=5pt}
\usepackage{braket}
\usepackage{logunit}
\usepackage{mathtools}

\newtagform{alph}[]()
\newtagform{Alph}[]()
\newtagform{roman}[]()
\newtagform{scroman}[][]

\title[Geometry of Biquadratic and Cubic Log-Unit Lattices]{%
Geometry of Biquadratic and Cyclic Cubic Log-Unit Lattices}

\author[Azpeitia Tellez]{Fernando Azpeitia Tellez$^1$}
\address{$^1$California State University San Marcos}
\email{azpei002@cougars.csusm.edu}

\author[Powell]{Christopher Powell$^2$}
\address{$^2$California State University San Marcos}
\email{powel054@cougars.csusm.edu}

\author[Sharif]{Shahed Sharif$^3$}
\address{$^3$California State University San Marcos}
\email{ssharif@csusm.edu}
\begin{document}
\maketitle

\section{Introduction}
\label{sec:introduction}

By Dirichlet's Unit Theorem, given a number field, the logarithm of the units in the ring of integers forms a lattice, called the \emph{log-unit lattice}. We investigate the geometry of the log-unit lattices associated to biquadratic and cyclic cubic number fields. Consider $\R^n$ as a vector space with the usual Euclidean inner product. We say a lattice embedded in $\R^n$ is \emph{orthogonal} if it has a basis of pairwise orthogonal vectors, where orthogonal means with respect to the usual inner product. Then we have the following:

\begin{theorem}\label{thm:typeI-primes}
    Suppose $K=\Q(\sqrt{p_1},\sqrt{p_2})$, where $p_1, p_2$ are distinct primes. Let $K_i = \Q(\sqrt{p_i})$ for $i=1,2$, and $K_3= \Q(\sqrt{p_1p_2})$. Let $\epsilon_i$ be a fundamental unit for $K_i$ and $e_i = \Nm_{K_i/\Q} (\epsilon_i)$. If either
  \begin{enumerate}
      \item $p_1 \equiv 1 \pmod{4}$, $p_2 \not\equiv 3 \pmod{4}$, and $e_1 = e_2 = -1$, $e_3 = 1$; or
      \item $p_1 \equiv 3 \pmod{4}$ and $p_2 = 2$,
\end{enumerate}
then the log-unit lattice of $K$ is orthogonal.
\end{theorem}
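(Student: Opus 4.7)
The plan is to take the fundamental units $\epsilon_1, \epsilon_2, \epsilon_3$ of the three quadratic subfields, verify that their images under the log-embedding are pairwise orthogonal, and then show that under the stated hypotheses these units together with $\pm 1$ already generate all of $\mathcal{O}_K^\times$. Concretely, I label the four real embeddings of $K$ by sign pairs $(s_1,s_2)\in\{\pm 1\}^2$ via $\sigma_{s_1,s_2}(\sqrt{p_i}) = s_i\sqrt{p_i}$ and write $\ell_i = \log|\epsilon_i|$. Using that the embeddings of $K$ fixing $K_i$ are the two with $s_i = +1$ for $i=1,2$ (respectively the two with $s_1 s_2 = +1$ for $i=3$), together with $|\epsilon_i\bar\epsilon_i| = 1$, the log-vectors become
\[
v_1 = (\ell_1,\ell_1,-\ell_1,-\ell_1),\quad v_2 = (\ell_2,-\ell_2,\ell_2,-\ell_2),\quad v_3 = (\ell_3,-\ell_3,-\ell_3,\ell_3)
\]
in the ordering $(\sigma_{++},\sigma_{+-},\sigma_{-+},\sigma_{--})$; a direct dot-product calculation gives $v_i\cdot v_j = 0$ for $i\ne j$, and since the $\ell_i$ are nonzero these three vectors are linearly independent in the rank-$3$ trace-zero hyperplane.

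What remains is a unit-index computation. Set $Q = [\mathcal{O}_K^\times : \{\pm 1\}\langle \epsilon_1,\epsilon_2,\epsilon_3\rangle]$. By Kubota's theorem $Q\in\{1,2,4\}$, and a short two-dimensional lattice check shows that if $Q > 1$ then adjoining a square root $\sqrt{\epsilon_i\epsilon_j}$ generically destroys orthogonality (it would force $|\ell_i| = |\ell_j|$). Thus the theorem reduces to proving $Q = 1$, which is equivalent to showing that no nontrivial product $\pm\epsilon_1^{a_1}\epsilon_2^{a_2}\epsilon_3^{a_3}$ with $(a_1,a_2,a_3)\in\{0,1\}^3\setminus\{\mathbf{0}\}$ is a square in $\mathcal{O}_K$.

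I expect this squareness check to be the main obstacle, and it is here that the two hypothesis sets play their role. In case~(1), the norm conditions $e_1 = e_2 = -1$, $e_3 = +1$ enter through the relative norms $\mathrm{N}_{K/K_j}$: computing $\mathrm{N}_{K/K_3}(\epsilon_1\epsilon_2) = e_1 e_2$ and the analogous relations for the other pairs, a hypothetical square root in $K$ of a nontrivial product of $\epsilon_i$'s would impose that certain products of the $e_i$ equal $+1$ in $\mathcal{O}_{K_j}^{\times 2}$, and combined with the congruence conditions on $p_1, p_2$ (which control ramification at $2$ and $2$-adic local units) this becomes inconsistent. In case~(2), one first checks automatically that $e_1 = +1$ (since $p_1\equiv 3\pmod 4$ makes the negative Pell equation over $\Q(\sqrt{p_1})$ unsolvable) and $\epsilon_2 = 1+\sqrt{2}$; the obstruction then comes from $2$-adic valuations in $\mathcal{O}_K$, where the prime above $2$ ramifies differently in the three $K_i$, forcing any would-be square root to have incompatible valuations. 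Once $Q = 1$ is established, the log-unit lattice equals $\mathbb{Z} v_1 + \mathbb{Z} v_2 + \mathbb{Z} v_3$, which is orthogonal by the first paragraph, proving the theorem.
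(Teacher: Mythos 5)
Your first paragraph is fine: your $v_1,v_2,v_3$ are exactly the paper's $w_1,w_2,w_3$, and they are indeed pairwise orthogonal and span the trace-zero hyperplane. The fatal step is the reduction to $Q=1$. You assert that if $Q=[\okx:\{\pm1\}\langle\epsilon_1,\epsilon_2,\epsilon_3\rangle]>1$ then orthogonality is destroyed, but that holds only when the extra units are square roots of products of two or three \emph{distinct} $\epsilon_i$ (these contribute log-vectors such as $\tfrac12(w_i+w_j)$, which do break orthogonality). If instead $\sqrt{\epsilon_i}\in\okx$ for a single $i$, the lattice merely acquires $\tfrac12 w_i$ in place of $w_i$ and the basis stays orthogonal. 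Worse, in both cases of the theorem $Q$ is genuinely bigger than $1$: by Kubota's theorem (quoted as Theorem~\ref{thm:mazur-ullom-main}), under hypothesis (1) a system of fundamental units is $\epsilon_1,\epsilon_2,\sqrt{\epsilon_3}$, so $Q=2$, and under hypothesis (2) it is $\sqrt{\epsilon_1},\epsilon_2,\sqrt{\epsilon_3}$, so $Q=4$. Hence the statement you propose to establish --- that no nontrivial product $\pm\epsilon_1^{a_1}\epsilon_2^{a_2}\epsilon_3^{a_3}$ is a square in $\mathcal{O}_K$ --- is false under both hypotheses, and the relative-norm and $2$-adic computations you sketch cannot succeed; they would instead reveal that $\epsilon_3$ (and in case (2) also $\epsilon_1$) does acquire a square root in $K$.

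The repair is the paper's actual route: invoke Kubota's classification to determine exactly which square roots occur, observe that under hypotheses (1) and (2) only square roots of \emph{single} $\epsilon_i$'s appear, so that $\Lambda_K=\Lambda(w_1,w_2,\tfrac12 w_3)$ in case (1) and $\Lambda_K=\Lambda(\tfrac12 w_1,w_2,\tfrac12 w_3)$ in case (2), and note that rescaling individual vectors of an orthogonal basis preserves orthogonality. The hard number-theoretic content --- which products of the $\epsilon_i$ become squares in $K$ --- is precisely Kubota's theorem; your plan replaces it with a claim that is both unproved and, in the cases at hand, false.
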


\begin{theorem}\label{thm:typeII-primes}
  If $K = \Q(\sqrt{p_1}, \sqrt{p_2})$ where $p_1,p_2$ are distinct primes satisfying $p_1 \equiv p_2 \equiv 3 \pmod{4}$, then the log-unit lattice of $K$ is not orthogonal.
\end{theorem}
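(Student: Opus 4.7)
The plan is to exhibit an orthogonal sublattice $\Lambda_0 \subseteq \Lambda_K$ coming from the three quadratic subfields, show that $\Lambda_0 \subsetneq \Lambda_K$ under the congruence hypothesis, and then use Galois equivariance to rule out any orthogonal basis of $\Lambda_K$. Let $G = \mathrm{Gal}(K/\Q) \cong (\Z/2\Z)^2$, with nontrivial characters $\chi_1, \chi_2, \chi_3$ corresponding to $K_1, K_2, K_3$, and decompose the trace-zero hyperplane $H \subset \R^4$ into the one-dimensional $G$-isotypic pieces $H = H_1 \oplus H_2 \oplus H_3$. With $L$ the log-unit map and $v_i = L(\epsilon_i)$, one checks that $v_i \in H_i$ because $\epsilon_i \in K_i$ has equal absolute value on any two embeddings of $K$ that restrict to a common embedding of $K_i$. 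Hence $\{v_1, v_2, v_3\}$ is an orthogonal triple, spanning an orthogonal sublattice $\Lambda_0 = \Z v_1 + \Z v_2 + \Z v_3$ of $\Lambda_K$.

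For the strict inclusion, note that $p_i \equiv 3 \pmod 4$ makes $-1$ a quadratic nonresidue modulo $p_i$, forcing $\Nm_{K_i/\Q}(\epsilon_i) = +1$ for $i = 1,2$; and $p_1 p_2 \equiv 1 \pmod 4$ is not a sum of two squares (each $p_i$ appears to an odd power), so $x^2 - p_1 p_2 y^2 = -1$ is insoluble in integers and $\Nm_{K_3/\Q}(\epsilon_3) = +1$ as well. Under these three norm equalities, Kubota's descent on $\mathcal O_K^\times/(\mathcal O_K^\times)^2$ for real biquadratic fields produces a unit $\eta \in \mathcal O_K^\times$ with $\eta^2 = \pm \epsilon_1 \epsilon_2 \epsilon_3$ (for appropriate signs of the fundamental units), so that $L(\eta) = \tfrac12(v_1 + v_2 + v_3) \notin \Lambda_0$, and $[\Lambda_K : \Lambda_0] \geq 2$.

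Suppose for contradiction that $w_1, w_2, w_3$ form an orthogonal basis of $\Lambda_K$. The group $G$ acts on $\Lambda_K$ by coordinate permutation, hence by isometries of $\R^4$, so each $g \in G$ acts on this basis as a signed permutation, possibly permuting $w_i$'s of equal norm. Because $H$ is multiplicity-free as a $G$-module, any basis of $H$ diagonalizing the $G$-action has each vector in some $H_j$, and $\Lambda_K \cap H_j = \Z v_j$: indeed, a unit $u$ with $L(u) \in H_j$ and $gu = -u$ for $g$ generating $\mathrm{Gal}(K/K_j)$ would be of the form $u = \alpha \sqrt d$ with $\alpha \in K_j$ and $d \in \{p_1, p_2, p_1 p_2\}$, but integrality of $u \in \mathcal O_K^\times$ at the prime of $K$ above $d$ (ramified of index $2$ over $K_j$) forces the $K_j$-valuation of $\alpha$ there to be $-\tfrac12$, impossible; hence $u \in K_j$ and $L(u) \in \Z v_j$. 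In the pure sign-change case $w_i$ itself lies in some $H_{\sigma(i)}$, so $w_i \in \Z v_{\sigma(i)} \subseteq \Lambda_0$. In the permutation case (say $g$ swaps $w_1 \leftrightarrow \pm w_2$), the $G$-diagonalizing basis $w_1 \pm w_2, w_3$ lies in the various $\Z v_j$; the explicit description $\Lambda_K = \Lambda_0 \sqcup \bigl(\Lambda_0 + \tfrac12(v_1 + v_2 + v_3)\bigr)$ then forces the scalars $w_1 \pm w_2 = n_\pm v_{\ell_\pm}$ to be even (since the coordinate of $w_1$ on $v_{\ell_3}$ is $0$, ruling out the half-integer coset), so $w_1, w_2 \in \Lambda_0$. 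Either way $\Lambda_K \subseteq \Lambda_0$, contradicting the previous step.

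The main obstacle is producing the extra unit $\eta$ in the middle step, which is exactly where the congruences $p_1 \equiv p_2 \equiv 3 \pmod 4$ enter essentially: they ensure $\Nm(\epsilon_i) = +1$ for $i = 1, 2, 3$, which is what allows the Kubota descent to yield the square root. A secondary subtlety---the permutation case of the final Galois argument---is handled uniformly by the multiplicity-freeness of $H$ as a $G$-module together with the explicit parity description of $\Lambda_K$.
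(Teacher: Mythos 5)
Your overall strategy---exhibit the orthogonal sublattice $\Lambda_0 = \Z\,\Log(\epsilon_1)+\Z\,\Log(\epsilon_2)+\Z\,\Log(\epsilon_3)$, show $\Lambda_0 \subsetneq \Lambda_K$, and then use Galois equivariance together with the rigidity of orthogonal bases to derive a contradiction---is a genuinely different route from the paper's, which instead invokes Theorem~\ref{thm:mazur-ullom-main}(3) to conclude that $K$ is of type~II, passes to the rank-two sublattice spanned by $\tfrac12(w_1+w_2)$ and $\tfrac12(w_2-w_1)$, computes its successive minima explicitly, and checks that no two candidate minimal vectors are orthogonal (Lemma~\ref{lemma:typeii-sublattice} plus Proposition~\ref{prop:one-vector-orthogonal}). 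However, your middle step contains a genuine error. From the three norm conditions $e_1=e_2=e_3=+1$ (which you verify correctly) you assert that Kubota's descent produces $\eta\in\mathcal{O}_K^\times$ with $\eta^2=\pm\epsilon_1\epsilon_2\epsilon_3$, so that $\Lambda_K$ contains $\tfrac12(w_1+w_2+w_3)$. That is the type~IV structure, and it is not what happens here: Kubota's theorem, as recorded in Theorem~\ref{thm:mazur-ullom-main}(3), says that for $p_1\equiv p_2\equiv 3\pmod 4$ the field is of type~II, meaning the extra unit is $\sqrt{\epsilon_i\epsilon_j}$ for some pair $i\neq j$ (possibly together with $\sqrt{\epsilon_k}$), and the four types are mutually exclusive. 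Having all three norms equal to $+1$ does not by itself determine which products of the $\epsilon_i$ admit square roots in $\mathcal{O}_K^\times$; that is a finer arithmetic question than the signs of the norms.

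This error is not cosmetic, because your final step leans on the exact coset description of $\Lambda_K/\Lambda_0$. The pure sign-change case survives, since it only needs the strict inclusion $\Lambda_0\subsetneq\Lambda_K$, which does hold for type~II because $\tfrac12(w_i+w_j)\in\Lambda_K\setminus\Lambda_0$. But in the permutation case you rule out the nontrivial coset by observing that a basis vector has zero coordinate on the third eigendirection; with the correct coset representative $\tfrac12(w_i+w_j)$ that test fails, since this vector has zero (hence integral) coordinate on $w_k$ and yet does not lie in $\Lambda_0$. The case can be repaired---if $b_1=\tfrac12(w_i+w_j)$ and $b_2=\tfrac12(w_i-w_j)$ then $\braket{b_1,b_2}=\tfrac14\left(\norm{w_i}^2-\norm{w_j}^2\right)\neq 0$ by Lemma~\ref{lemma:wi-orthogonal-lengths}---but that computation is essentially what the paper does in Lemma~\ref{lemma:typeii-sublattice}. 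You should also justify the assertion that a lattice isometry sends an orthogonal basis to a signed permutation of itself; this is the content of Proposition~\ref{prop:unique-orthogonal-basis} and needs proof. As written, the argument does not go through.
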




\begin{theorem}\label{thm:typeiv-primes}
Suppose $K=\Q(\sqrt{p_1},\sqrt{p_2})$, where $p_1, p_2$ are distinct primes. 
  Let $\epsilon_i$ be a fundamental unit for $K_i := \Q(\sqrt{p_i})$, $K_3 = \Q(\sqrt{p_1p_2})$ and let $e_i = \Nm_{K_i/\Q} (\epsilon_i)$. If $p_1 \equiv 1 \pmod{4}$, $p_2 \not\equiv 3 \pmod{4}$, and $e_i=-1$ for all $i$,
then the log-unit lattice of $K$ is not orthogonal.
\end{theorem}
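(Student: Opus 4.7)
The plan is to establish that the log-unit lattice $L_K$ of $K$ strictly contains the orthogonal sublattice coming from the subfield units, and then use the Galois structure of $L_K$ to rule out any orthogonal basis. Let $\Lambda\colon\mathcal{O}_K^\times \to \R^4$ denote the log-embedding. As in the proof of Theorem~\ref{thm:typeI-primes}, the group $G := \mathrm{Gal}(K/\Q) \cong (\mathbb{Z}/2\mathbb{Z})^2$ acts on $\R^4$ by permuting the four real embeddings, and the $3$-dimensional trace-zero hyperplane containing $L_K$ decomposes into three pairwise orthogonal Galois eigenlines $\R u_1, \R u_2, \R u_3$, one per nontrivial character $\chi_i$. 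Since $\epsilon_i \in K_i$ is fixed by $\mathrm{Gal}(K/K_i)$, one has $\Lambda(\epsilon_i) = a_i u_i$ with $a_i := \log|\epsilon_i| > 0$, and so $L_0 := \Lambda(\langle \epsilon_1,\epsilon_2,\epsilon_3\rangle) = \mathbb{Z} a_1 u_1 \oplus \mathbb{Z} a_2 u_2 \oplus \mathbb{Z} a_3 u_3$ is a rank-$3$ orthogonal sublattice of $L_K$.

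To produce a unit outside $L_0$, observe that under $e_1 = e_2 = e_3 = -1$ a direct sign check in each real embedding shows $\epsilon_1\epsilon_2\epsilon_3$ is totally positive, while the identity $\tau_i(\epsilon_1\epsilon_2\epsilon_3) = \epsilon_1\epsilon_2\epsilon_3 \cdot (\epsilon_j\epsilon_k)^{-2}$ (where $\{i,j,k\} = \{1,2,3\}$) shows that its class in $K^\times/(K^\times)^2$ is $G$-invariant. Appealing to the Kubota--Kuroda theory of unit indices in real biquadratic fields, which forces $[\mathcal{O}_K^\times : \langle -1, \epsilon_1, \epsilon_2, \epsilon_3\rangle] \geq 2$ whenever all three subfield norms are $-1$, one obtains a unit $\mu \in \mathcal{O}_K^\times$ with $\mu^2 = \pm\, \epsilon_1^{m_1} \epsilon_2^{m_2} \epsilon_3^{m_3}$ for some $(m_1,m_2,m_3) \in (\mathbb{Z}/2\mathbb{Z})^3 \setminus \{0\}$. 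Since the quotient $\mathcal{O}_K^\times/\langle -1,\epsilon_1,\epsilon_2,\epsilon_3\rangle$ is known to have exponent $2$, this additionally gives $L_K \subseteq \tfrac12 L_0$, and $\Lambda(\mu) = \tfrac{1}{2}\sum_i m_i a_i u_i \notin L_0$, so $L_K \supsetneq L_0$.

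Now suppose for contradiction that $L_K = L_1 \perp L_2 \perp L_3$ is an orthogonal direct sum of rank-$1$ sublattices. By Eichler's uniqueness of orthogonal decompositions of positive definite lattices, $G$ permutes $\{L_1, L_2, L_3\}$, yielding $\phi\colon G \to S_3$ with $|\operatorname{im}\phi| \in \{1, 2\}$ (since $|G|=4$ and $S_3$ has no subgroup of order $4$). If $\phi$ is trivial, each $V_i := L_i \otimes \R$ is a $G$-stable line, so $V_i = \R u_{\pi(i)}$ for some permutation $\pi$; the hypothesis $\Nm_{K_i/\Q}(\epsilon_i) = -1$ prevents $\pm \epsilon_i$ from being a square in the totally real field $K$ (a square root $\nu$ would force $(\nu\tau_j(\nu))^2 = -1$ for any $j \ne i$, producing $\sqrt{-1}\in K$), which forces $L_K \cap \R u_i = \mathbb{Z} a_i u_i$, so $\bigoplus_i L_i \subseteq L_0 \subsetneq L_K$ --- a contradiction. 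If $|\operatorname{im}\phi| = 2$ with $\ker\phi = \{1, \tau_i\}$, then $\tau_j$ and $\tau_k$ act as the same transposition; tracking their $\pm 1$-eigenspaces on the $3$-dimensional space forces $V_3 = \R u_i$ and $V_1, V_2 = \R(u_j \pm u_k)$. But then any nonzero $\Lambda(\eta) \in V_1 \cap L_K \subseteq \tfrac12 L_0$ yields integers $y, z$ (not both zero) with $y a_j = z a_k$, forcing $a_j/a_k \in \Q$ and hence $\epsilon_j^q = \epsilon_k^p$ for some nonzero $p, q \in \mathbb{Z}$ --- impossible since $\epsilon_j \in K_j$, $\epsilon_k \in K_k$, and $K_j \cap K_k = \Q$ contains no unit of infinite order.

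The hardest step is producing $\mu$: while total positivity and the $G$-invariance of $[\epsilon_1\epsilon_2\epsilon_3]$ modulo squares are routine, upgrading these to the existence of $\sqrt{\pm \epsilon_1^{m_1}\epsilon_2^{m_2}\epsilon_3^{m_3}} \in K$ requires the descent input of Kubota--Kuroda theory, typically proved by case analysis on $p_1 \pmod 8$ together with the signatures of the subfield fundamental units. Once $L_K \supsetneq L_0$ is in hand, the Galois-theoretic argument ruling out an orthogonal decomposition is clean and uses only the incommensurability of $\log|\epsilon_j|$ and $\log|\epsilon_k|$ for $j \ne k$.
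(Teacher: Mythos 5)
Your proof is correct, but it follows a genuinely different route from the paper's. The paper deduces this theorem from Theorem~\ref{thm:mazur-ullom-main}(1) (which identifies $K$ as type IV, so that $\sqrt{\epsilon_1\epsilon_2\epsilon_3},\epsilon_2,\epsilon_3$ is a fundamental system) together with Theorem~\ref{thm:typeIV-not-orthogonal}; the latter computes the successive minima $\lambda_1,\lambda_2$ of $\Lambda_K$, lists the finitely many vectors achieving them, enumerates the orthogonal pairs among those candidates, and kills the surviving cases using Lemma~\ref{lemma:wi-orthogonal-lengths}(b), i.e.\ the transcendence input that $\log|\epsilon_i|/\log|\epsilon_j|$ is not algebraic. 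You instead exploit the Galois action: uniqueness of the orthogonal decomposition (Eichler's theorem; for rank-one summands this is exactly Proposition~\ref{prop:unique-orthogonal-basis}) forces $G$ to permute the three putative orthogonal lines, and the two possible images in $S_3$ are eliminated using only (i) the fact that $\pm\epsilon_i$ cannot be a square in the totally real field $K$ when $e_i=-1$, and (ii) the elementary irrationality of $\log|\epsilon_j|/\log|\epsilon_k|$. Your argument therefore avoids Baker/Gelfond--Schneider entirely (the paper needs it because its case $b_1,b_2=\pm v_1,\pm v_1'''$ requires $(\log|\epsilon_i|/\log|\epsilon_j|)^2\notin\Q$, which the elementary argument does not supply), at the cost of invoking unique decomposition and a more delicate equivariance analysis. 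One caution: your stated justification for $L_K\supsetneq L_0$ --- that $e_1=e_2=e_3=-1$ alone forces unit index at least $2$ --- is broader than what is true for general real biquadratic fields (type Ia with all norms $-1$ is not excluded in general). What you actually need is Kubota's result for $\Q(\sqrt{p_1},\sqrt{p_2})$ under the stated congruence conditions, i.e.\ precisely Theorem~\ref{thm:mazur-ullom-main}(1) with $e_3=-1$, which yields type IV and hence a unit $\sqrt{\epsilon_1\epsilon_2\epsilon_3}$ outside $\langle -1,\epsilon_1,\epsilon_2,\epsilon_3\rangle$. With that citation corrected, your proof is complete.
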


In the case where the log-unit lattice is orthogonal, we can estimate the size of an appropriate fundamental domain.
\begin{corollary}\label{cor:typeI-covering-radius-bounds}
Suppose $K = \Q(\sqrt{p_1},\sqrt{p_2})$, where $p_1, p_2$ are distinct primes. Let $\Delta_1, \Delta_2, \Delta_3$ be the discriminants of $K_1=\Q(\sqrt{p_1}), K_2=\Q(\sqrt{p_2})$, and $K_3=\Q(\sqrt{p_1p_2})$, respectively. For each $i$, let 
\[
\tilde{\Delta}_i = \log \left( \frac{1}{2}(\sqrt{\Delta_i-4}+\sqrt{\Delta_i})\right) \textrm{ and } \hat{\Delta}_i = \sqrt{\frac{1}{2}\Delta_i}\left(\frac{1}{2}\log \Delta_i + 1\right).
\]
If hypothesis $(1)$ of Theorem~\ref{thm:typeI-primes} holds, then the log-unit lattice of $K$ has a fundamental domain which is an orthogonal box with dimensions $\ell_1,\ell_2,\ell_3$, where
\begin{alignat*}{2}
  2\tilde{\Delta}_1 &\leq \ell_1 & &< 2\hat{\Delta}_1 \\
  2\tilde{\Delta}_2 &\leq \ell_2 & &< 2\hat{\Delta}_2, \textrm{ and} \\
  \tilde{\Delta}_3 &\leq \ell_3 & &< \hat{\Delta}_3.
\end{alignat*}
If hypothesis $(2)$ of Theorem~\ref{thm:typeI-primes} holds, then 
the log-unit lattice has a fundamental domain which is an orthogonal box with dimensions $\ell_1,\ell_2,\ell_3$, where
\begin{alignat*}{2}
  \tilde{\Delta}_1 &\leq \ell_1 & &< \hat{\Delta}_1 \\
  2\tilde{\Delta}_2 &\leq \ell_2 & &< 2\hat{\Delta}_2, \textrm{ and} \\
  \tilde{\Delta}_3 &\leq \ell_3 & &< \hat{\Delta}_3.
\end{alignat*}
\end{corollary}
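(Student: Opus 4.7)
The plan is to use the explicit orthogonal basis of the log-unit lattice produced by the proof of Theorem~\ref{thm:typeI-primes}, compute the Euclidean length of each basis vector in terms of the fundamental units $\epsilon_1,\epsilon_2,\epsilon_3$, and then substitute standard estimates for fundamental units of real quadratic fields. Because the basis is orthogonal, a fundamental domain of the log-unit lattice is the parallelepiped it spans, and its side lengths $\ell_i$ are exactly the norms of the basis vectors.

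To compute those norms, I would argue as follows. Each $\epsilon_i \in K_i$ has exactly two Galois conjugates in $K$, namely $\epsilon_i$ and $\bar\epsilon_i$, and $|\bar\epsilon_i|=\epsilon_i^{-1}$ since $\Nm_{K_i/\Q}(\epsilon_i)=\pm 1$. The stabilizer $\mathrm{Gal}(K/K_i)$ has index two in $\mathrm{Gal}(K/\Q)$, so the log embedding into $\R^4$ attached to $\epsilon_i$ has two entries equal to $+\log\epsilon_i$ and two equal to $-\log\epsilon_i$; its Euclidean norm is therefore $2\log\epsilon_i$. If $\eta \in \mathcal{O}_K^\times$ is a unit whose square equals $\epsilon_i$ up to a root of unity, then $\log\eta$ is half the embedding of $\epsilon_i$ and has norm $\log\epsilon_i$. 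Under hypothesis~(1), the orthogonal basis consists of the log embeddings of $\epsilon_1$, $\epsilon_2$, and a square-root unit associated to $\epsilon_3$, giving $(\ell_1,\ell_2,\ell_3) = (2\log\epsilon_1,\,2\log\epsilon_2,\,\log\epsilon_3)$. Under hypothesis~(2), both $\epsilon_1$ and $\epsilon_3$ enter via square-root units while $\epsilon_2$ appears directly, giving $(\log\epsilon_1,\,2\log\epsilon_2,\,\log\epsilon_3)$.

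It then remains to show $\tilde{\Delta}_i \leq \log\epsilon_i < \hat{\Delta}_i$ for each relevant $i$. The lower bound follows by writing $\epsilon_i = (t + u\sqrt{\Delta_i})/2$, where $(t,u)$ is the fundamental solution to $t^2 - \Delta_i u^2 = \pm 4$: since $u \geq 1$, we have $\epsilon_i \geq \tfrac12(\sqrt{\Delta_i-4} + \sqrt{\Delta_i})$. The upper bound is a consequence of the analytic class number formula $h_i \log\epsilon_i = \tfrac12 \sqrt{\Delta_i}\, L(1,\chi_{K_i})$ combined with $h_i \geq 1$ and a standard effective bound for $L(1,\chi_{K_i})$. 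Substituting these into the length formulas above yields the stated inequalities. The main obstacle has already been handled by Theorem~\ref{thm:typeI-primes}, which supplies the orthogonal structure; what remains is essentially bookkeeping, with the only subtle point being the factor-of-two discrepancies among the $\ell_i$ bounds, which reflect the Kubota-type index $[\mathcal{O}_K^\times : \mu_K \langle\epsilon_1,\epsilon_2,\epsilon_3\rangle]$ being $2$ in case~(1) and $4$ in case~(2).
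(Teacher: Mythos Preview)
Your proposal is correct and follows essentially the same route as the paper: identify the explicit orthogonal basis $w_1,w_2,\tfrac12 w_3$ (hypothesis~(1)) or $\tfrac12 w_1,w_2,\tfrac12 w_3$ (hypothesis~(2)) via Kubota's classification, observe $\|w_i\|=2\log|\epsilon_i|$, and then bound each $\log|\epsilon_i|$ between $\tilde{\Delta}_i$ and $\hat{\Delta}_i$. The paper simply cites references for the regulator bounds, whereas you sketch their proofs (Pell-equation minimality for the lower bound, class number formula plus an effective $L(1,\chi)$ estimate for the upper), but the structure of the argument is identical.
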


Using the above, we obtain estimates for the covering radius of the lattice. Recall that the covering radius $\rho$ of a lattice $\Lambda$ is the radius of the largest open ball in $\Lambda \otimes \R$ which does not contain any elements of $\Lambda$.
\begin{corollary}
  With notation and hypotheses as in Corollary~\ref{cor:typeI-covering-radius-bounds}, let $\rho$ be the covering radius of the log-unit lattice of $K$. If hypothesis $(1)$ of Theorem~\ref{thm:typeI-primes} holds, then
  \[
    \left(\tilde{\Delta}_1^2 + \tilde{\Delta}_2^2 + \frac{1}{4}\tilde{\Delta}_3^2\right)^{\frac{1}{2}} \leq \rho < \left(\hat{\Delta}_1^2 + \hat{\Delta}_2^2 + \frac{1}{4}\hat{\Delta}_3^2\right)^{\frac{1}{2}}.
  \]
If hypothesis $(2)$ of Theorem~\ref{thm:typeI-primes} holds, then
  \[
    \left(\frac{1}{4}\tilde{\Delta}_1^2 + \tilde{\Delta}_2^2 + \frac{1}{4}\tilde{\Delta}_3^2\right)^{\frac{1}{2}} \leq \rho < \left(\frac{1}{4}\hat{\Delta}_1^2 + \hat{\Delta}_2^2 + \frac{1}{4}\hat{\Delta}_3^2\right)^{\frac{1}{2}}.
  \]
\end{corollary}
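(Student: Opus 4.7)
The plan is to combine Corollary~\ref{cor:typeI-covering-radius-bounds} with the standard formula for the covering radius of an orthogonal lattice and then read off the two stated inequalities.

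The first step I would carry out is to record the elementary fact that if a lattice $\Lambda \subset \R^n$ is spanned by pairwise orthogonal vectors $v_1,\ldots,v_n$ of lengths $\ell_1,\ldots,\ell_n$, then its covering radius equals $\tfrac{1}{2}\sqrt{\ell_1^2 + \cdots + \ell_n^2}$. A quick justification: after an orthogonal change of coordinates one may take $v_i = \ell_i e_i$, so that $\Lambda = \prod_i \ell_i\mathbb{Z}$. The Voronoi cell about the origin is then the axis-aligned box $\prod_i [-\ell_i/2,\ell_i/2]$, and the points of this box farthest from the origin are the corners, each at distance $\tfrac{1}{2}\sqrt{\sum_i \ell_i^2}$.

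Next I would invoke Corollary~\ref{cor:typeI-covering-radius-bounds}, which under either hypothesis produces an orthogonal fundamental box for the log-unit lattice of $K$ together with two-sided bounds on the side lengths $\ell_1,\ell_2,\ell_3$. Under hypothesis $(1)$, plugging $2\tilde{\Delta}_1 \leq \ell_1 < 2\hat{\Delta}_1$, $2\tilde{\Delta}_2 \leq \ell_2 < 2\hat{\Delta}_2$, and $\tilde{\Delta}_3 \leq \ell_3 < \hat{\Delta}_3$ into $\rho = \tfrac{1}{2}\sqrt{\ell_1^2+\ell_2^2+\ell_3^2}$ and using monotonicity of the square root in each coordinate yields the stated bounds $\sqrt{\tilde{\Delta}_1^2+\tilde{\Delta}_2^2+\tfrac{1}{4}\tilde{\Delta}_3^2} \leq \rho < \sqrt{\hat{\Delta}_1^2+\hat{\Delta}_2^2+\tfrac{1}{4}\hat{\Delta}_3^2}$. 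The hypothesis $(2)$ case proceeds identically, with the modified bounds $\tilde{\Delta}_1 \leq \ell_1 < \hat{\Delta}_1$ producing the coefficient $\tfrac{1}{4}$ on the first term.

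I do not anticipate a serious obstacle; the argument is essentially bookkeeping once the Voronoi-cell computation is recorded. The only point requiring any care is that the fundamental domain produced by Corollary~\ref{cor:typeI-covering-radius-bounds} really is the box spanned by a pairwise orthogonal basis (rather than merely a parallelepiped of the stated dimensions), so that the covering-radius formula applies; this is how that corollary is framed, so no additional verification is needed.
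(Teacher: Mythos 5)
Your proposal is correct and follows essentially the same route as the paper: both reduce to the identity $\rho = \tfrac{1}{2}\sqrt{\ell_1^2+\ell_2^2+\ell_3^2}$ for a lattice with an orthogonal fundamental box and then substitute the side-length bounds from Corollary~\ref{cor:typeI-covering-radius-bounds}. Your Voronoi-cell justification of that identity is slightly more explicit than the paper's ``largest sphere centered in $F$ avoiding the vertices'' phrasing, but the argument is the same.
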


\begin{proof}
Let $F$ be a fundamental domain which is an orthogonal box with dimensions $\ell_1,\ell_2,\ell_3$. Then the largest sphere centered in $F$ which does not contain any of the vertices must have radius $\sqrt{(\frac{\ell_1}{2})^2 + (\frac{\ell_2}{2})^2 + (\frac{\ell_3}{2})^2}$. The claim follows from Corollary~\ref{cor:typeI-covering-radius-bounds}.
\end{proof}
Theorems~\ref{thm:typeI-primes}, \ref{thm:typeII-primes}, and \ref{thm:typeiv-primes} are proven as special cases of a stronger result, Theorem~\ref{thm:all-types}. The latter theorem is based on a classification of the units of real biquadratic fields into four types, and shows that only one of these types has an orthogonal log-unit lattice.

Lastly, in Theorem~\ref{thm:cubic-cyclic}, we show that the log-unit lattices of cyclic cubic fields are all similar; that is, they are isometric up to scaling.

The motivation behind the above results stems from lattice-based cryptography. The security of a number of recent cryptosystems depends upon a given log-unit lattice having a suitable geometry---generally, one for which lattice algorithms like LLL perform poorly. For example, in~\cite{cramer2016recov}, it is shown that the logs of the cyclotomic units, which form a sublattice of the log-unit lattice, have a dual basis which is, up to scaling, ``close'' to orthonormal. As observed in~\cite[\S{}1]{cramer2016recov}, a consequence is that the SOLILOQUY cryptosystem~\cite{campbell2014soliloquy} and the Smart-Vercauteren cryptosystem of~\cite{smart2010fhe} are broken.

Unfortunately, aside from~\cite{cramer2016recov}, \cite{stickelberger}, and~\cite{ducas-plancon-wesolowski}, there are no known results on the geometry of log-unit lattices. The goal of this paper is to begin such a study  by considering the log-unit lattices for low-degree number fields. The log-unit lattices of both quadratic number fields and complex biquadratic number fields have rank at most $1$, and hence have uninteresting geometry. For real biquadratic number fields, a result of Kubota~\cite[Satz 1]{kubota1956uber} allows us to determine geometric properties of the log-unit lattices, specifically orthogonality. Lastly, the Galois action imposes strict conditions on the log-unit lattice of cyclic cubic number fields.

Mazur-Ullom~\cite{mazur2004biquadratic} classify real biquadratic number fields into 4 types. We will deal with each type in order, starting in \S{}\ref{sec:type-i-lattices}, before turning to cyclic cubic fields in \S{}\ref{sec:cubic-cycl-latt}.

\paragraph{\textbf{Acknowledgments.}}

This project was partially supported by the NSF grant DMS-1247679 and by Viasat. The authors are grateful to Pete Clark, Kamel Haddad, and especially Wayne Aitken for helpful comments.

\section{Preliminaries}
\label{sec:preliminaries}

\subsection{Lattices}
\label{sec:lattices}

Recall that a \emph{lattice} $\Lambda$ is a discrete subgroup of $\R^n$ for some $n$. 
If $b_1, \dots, b_m \in \R^n$ is a linearly independent list of vectors, let
\[
  \Lambda(b_1, \dots, b_m) = \Z b_1 + \cdots + \Z b_m.
\]
We say $\Lambda(b_1, \dots, b_m)$ is the lattice \emph{generated} by the $b_i$, or the lattice with \emph{basis} $b_1, \dots, b_m$.
We also say that such a lattice has \emph{rank $m$}. If a lattice has an orthogonal basis, we say that the lattice is \emph{orthogonal}. Write $\Lambda_\R$ for $\Lambda \otimes \R \subset \R^n$, so that $\Lambda(b_1, \dots, b_m)_\R$ is the $\R$-span of the $b_i$. Let $\braket{v,w}$ denote the Euclidean inner product of $v,w \in \R^n$.

\begin{definition}
Let $\Lambda \subset \R^n$ be a rank $m$ lattice. For $r > 0$, let $B_r \subset \R^n$ be the closed ball of radius $r$ centered at the origin. Then for $i = 1, \dots, m$, define $\lambda_i(\Lambda)$ by
\[
  \lambda_i(\Lambda) = \min\{r \mid \dim (\mathrm{Span}(\Lambda \cap B_r)) \geq i \}.
\]
\end{definition}
Thus for example $\lambda_1(\Lambda)$ is the length of the shortest nonzero vector in $\Lambda$, and $\lambda_m(\Lambda)$ is the smallest number such that there exists a basis $b_1, \dots, b_m$ of $\Lambda$ for which $\|b_i\| \leq \lambda_m$ for all $i$.

\begin{lemma}\label{lemma:lambda1-b1}
  Suppose $b_1, \dots, b_m$ is an orthogonal basis for a lattice $\Lambda$, and $\|b_1\| \leq \|b_i\|$ for all $i$. Then $\lambda_1(\Lambda) = \|b_1\|$.
\end{lemma}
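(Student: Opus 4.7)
The plan is to prove both inequalities $\lambda_1(\Lambda) \leq \|b_1\|$ and $\lambda_1(\Lambda) \geq \|b_1\|$ directly from the definition and the orthogonality hypothesis. The first is immediate since $b_1$ itself is a nonzero lattice vector, so $\lambda_1(\Lambda) \leq \|b_1\|$.

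For the reverse inequality, I would take an arbitrary nonzero $v \in \Lambda$ and expand it in the given basis as $v = \sum_{i=1}^m a_i b_i$ with $a_i \in \Z$, not all zero. Using the orthogonality of the $b_i$ together with the Pythagorean identity, I would compute
\[
  \|v\|^2 = \sum_{i=1}^m a_i^2 \|b_i\|^2 \geq \|b_1\|^2 \sum_{i=1}^m a_i^2 \geq \|b_1\|^2,
\]
where the first inequality uses $\|b_1\| \leq \|b_i\|$ for every $i$ and the second uses that at least one $a_i$ is a nonzero integer, hence $a_i^2 \geq 1$. Taking square roots yields $\|v\| \geq \|b_1\|$ for every nonzero $v \in \Lambda$, which gives $\lambda_1(\Lambda) \geq \|b_1\|$.

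There is no real obstacle here: the lemma is essentially a one-line consequence of Pythagoras combined with the integrality of lattice coordinates with respect to a basis. The only point worth being careful about is to invoke that $b_1, \dots, b_m$ is a \emph{basis} (not just an orthogonal generating set), so that the expansion $v = \sum a_i b_i$ has integer coefficients; this is already built into the hypothesis.
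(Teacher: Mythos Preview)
Your proof is correct and is essentially the same as the paper's: expand an arbitrary nonzero lattice vector in the orthogonal basis, apply Pythagoras, and use integrality of the coefficients to get $\|v\| \ge \|b_1\|$. You are slightly more explicit in stating the upper bound $\lambda_1(\Lambda) \le \|b_1\|$ and in writing out the intermediate inequality $\sum a_i^2 \|b_i\|^2 \ge \|b_1\|^2 \sum a_i^2$, but the argument is the same.
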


\begin{proof}
  If $v \in \Lambda$, then there are integers $\alpha_1, \dots, \alpha_m$ such that $v = \sum \alpha_i b_i$. But then
  \[
    \|v\|^2 = \sum \alpha_i^2 \|b_i\|.
  \]
  Thus if $v \neq 0$, then $\|v\| \geq \|b_1\|$. The claim follows.
\end{proof}

\begin{proposition}\label{prop:one-vector-orthogonal}
  Suppose $V$ be a subspace of $\R^n$. Let $v_0 \in V$ be a nonzero vector, and let $W$ be the orthogonal complement of $\spn(v_0)$ in $V$. Let $L_W \subset W$ be a lattice, and let $L$ be the lattice generated by $L_W$ and $v_0$. Suppose that $(b_1, \dots, b_n)$ is an orthogonal basis for $L$. Then $v_0 = \pm b_i$ for some $i$.
\end{proposition}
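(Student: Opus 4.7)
The plan is to expand $v_0$ in the orthogonal basis $b_1, \dots, b_n$ and then use a Parseval-type identity together with integrality constraints coming from the lattice structure of $L$ to force all but one of the coefficients to vanish.

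First I would record the structural fact that every element of $L$ admits a unique decomposition $a v_0 + w$ with $a \in \Z$ and $w \in L_W$; this follows because $L$ is generated by $L_W$ and $v_0$, while $\spn(v_0)$ and $W$ are complementary inside $V$. Applied to each basis vector, it yields $b_i = a_i v_0 + w_i$ with $a_i \in \Z$ and $w_i \in L_W \subset W$. Since $w_i \perp v_0$, Pythagoras gives $\|b_i\|^2 = a_i^2 \|v_0\|^2 + \|w_i\|^2$, so that
\[
a_i \neq 0 \;\Longrightarrow\; \|b_i\|^2 \geq \|v_0\|^2.
\]

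Next I would write $v_0 = \sum_i \alpha_i b_i$ with $\alpha_i \in \Z$. Orthogonality of the basis immediately gives $\alpha_i \|b_i\|^2 = \langle v_0, b_i \rangle = a_i \|v_0\|^2$, so the coefficients $\alpha_i$ and $a_i$ vanish simultaneously. Expanding $\|v_0\|^2$ in the orthogonal basis and applying $\alpha_i^2 \geq 1$ (for $\alpha_i \neq 0$) together with the displayed inequality produces the squeeze
\[
\|v_0\|^2 = \sum_i \alpha_i^2 \|b_i\|^2 \;\geq\; \#\{i : \alpha_i \neq 0\}\cdot \|v_0\|^2.
\]
Since $v_0 \neq 0$ forces at least one nonzero $\alpha_i$, this pins the count down to exactly one index $i_0$, whence $v_0 = \alpha_{i_0} b_{i_0}$ and then $\alpha_{i_0}^2 \|b_{i_0}\|^2 = \|v_0\|^2 \leq \|b_{i_0}\|^2$ forces $\alpha_{i_0} = \pm 1$.

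The step I expect to require the most care is the opening integrality claim $a_i \in \Z$, which is really where the hypothesis that $L$ is \emph{generated} by $v_0$ and $L_W$ (as opposed to merely containing them as sublattices) is essential; without this, the bound $\|b_i\|^2 \geq \|v_0\|^2$ can fail and the squeeze collapses. Once the integrality is in hand, the rest is a clean exercise in orthogonal algebra.
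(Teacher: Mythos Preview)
Your argument is correct. Both you and the paper start from the same two ingredients---the direct-sum decomposition $L=\Z v_0\oplus L_W$ (with the accompanying Pythagorean identity) and the expansion $v_0=\sum_i\alpha_i b_i$---but you combine them differently. The paper orders the $b_i$ by length, uses the norm expansion of $v_0$ to kill all $\alpha_j$ with $\|b_j\|>\|v_0\|$, and then argues, for each remaining short $b_i$, that its $v_0$-component forces $b_i\in W$ or $b_i=\pm v_0$; since the short $b_i$ cannot all lie in $W$, one of them must be $\pm v_0$. Your route is more algebraic: the inner-product identity $\alpha_i\|b_i\|^2=\langle v_0,b_i\rangle=a_i\|v_0\|^2$ ties the two decompositions together, showing $\alpha_i\neq 0\Leftrightarrow a_i\neq 0$ and hence $\|b_i\|\geq\|v_0\|$ whenever $\alpha_i\neq 0$. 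That single observation collapses the argument into a one-line Parseval squeeze, with no need for ordering or case analysis. Your version is shorter and arguably cleaner; the paper's version is perhaps more geometric in flavor, making visible that every short basis vector is either $\pm v_0$ or lies in $W$.
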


\begin{proof}
  Without loss of generality, the $b_i$ are ordered so that $\|b_i\| \leq \|b_{i+1}\|$ for each $i$. Choose the largest $k$ for which $\|b_k\| \leq \|v_0\|$. Since $\|b_1\| = \lambda_1 \leq \|v_0\|$, such a $k$ exists. As the $b_i$ form a basis for $L$, there exist $\alpha_i \in \Z$ for which
  \[
    v_0 = \sum \alpha_i b_i = \sum_{i = 1}^k \alpha_i b_i + \sum_{j=k+1}^n \alpha_j b_j.
  \]
  Hence
  \[
    \|v_0\|^2 = \sum_{i=1}^k \alpha_i^2 \|b_i\|^2 + \sum_{j=k+1}^n \alpha_j^2 \|b_j\|^2.
  \]
  But for $j \geq k+1$, $\|b_j\|^2 > \|v_0\|^2$, and hence $\alpha_j = 0$.

  If $v \in L$, then there are unique $w \in L_W, \beta \in \Z$ such that $w + \beta v_0$. Since $\langle v_0, w\rangle = 0$, we see that
  \[
    \|v\|^2 = \|w\|^2 + \beta^2 \|v_0\|^2.
  \]
  In particular, if $\|v\| \leq \|v_0\|$, then $|\beta| \leq 1$. In the case that $\beta = \pm 1$, we have $w = 0$ and so $v = \pm v_0$. If $\beta = 0$, then $v \in W$. Applying this reasoning to the $b_i$ for $i \leq k$, we see that either all of the $b_i \in W$, or for some $i$, $b_i = \pm v_0$. Since $v_0 \notin W$, the first possibility cannot hold. 
\end{proof}

\begin{proposition}\label{prop:unique-orthogonal-basis}
  Let $\Lambda$ be an orthogonal lattice. Then, up to sign and rearrangement, there is a unique choice of orthogonal basis for $\Lambda$.
\end{proposition}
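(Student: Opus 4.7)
My plan is induction on the rank $m$ of $\Lambda$. The base case $m=1$ is immediate: if $\Lambda = \Z b_1$, then any generator is $\pm b_1$, so any other length-one orthogonal basis is $(\pm b_1)$.

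For the inductive step, suppose $(b_1, \dots, b_m)$ and $(b_1', \dots, b_m')$ are two orthogonal bases for $\Lambda$. I would apply Proposition~\ref{prop:one-vector-orthogonal} with $V = \Lambda_\R$, $v_0 = b_1'$, and $L_W = \Z b_2' + \cdots + \Z b_m'$. Pairwise orthogonality of the $b_k'$ shows $L_W \subset W$, where $W$ is the orthogonal complement of $\spn(b_1')$ in $V$, and by construction $\Lambda$ is generated by $L_W$ together with $v_0$. Since $(b_1,\dots,b_m)$ is an orthogonal basis for $\Lambda$, the proposition concludes that $b_1' = \pm b_j$ for some index $j$.

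Next I would reduce the matching of the remaining basis vectors to a rank $m-1$ problem. For $v \in \Lambda$, the condition $\langle v, b_1'\rangle = 0$ kills precisely the $b_1'$-coefficient in the expansion with respect to $(b_k')_k$, while using $b_1' = \pm b_j$ it also kills precisely the $b_j$-coefficient in the expansion with respect to $(b_\ell)_\ell$. Hence
\[
\Lambda \cap W \;=\; \sum_{k=2}^m \Z b_k' \;=\; \sum_{\ell \neq j} \Z b_\ell,
\]
and this common sublattice is an orthogonal lattice of rank $m-1$ carrying two orthogonal bases. By the inductive hypothesis, these bases agree up to sign and rearrangement; combined with $b_1' = \pm b_j$, this produces the required matching of $(b_1',\dots,b_m')$ with $(b_1,\dots,b_m)$.

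The only step that needs any care is the identity $\Lambda \cap W = \sum_{k \geq 2} \Z b_k'$, since it is what lets the induction close; once one notes that orthogonality forces the distinguished coefficient to vanish in both expansions, Proposition~\ref{prop:one-vector-orthogonal} does all the substantive work and the argument is otherwise just bookkeeping.
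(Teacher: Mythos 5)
Your proof is correct and rests on the same key tool as the paper's: Proposition~\ref{prop:one-vector-orthogonal} applied with one basis supplying $v_0$ and $L_W$ and the other serving as the orthogonal basis. The paper avoids your induction entirely by simply applying that proposition once for each $k$ (taking $v_0 = b_k$ and $L_W = \sum_{i\neq k}\Z b_i$) to conclude $b_k = \pm d_j$ directly, so your extra step identifying $\Lambda \cap W$ is sound but not needed.
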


\begin{proof}
  Let $b_1, \dots, b_m$ be an orthogonal basis for $\Lambda$. Then it suffices to show that for any other orthogonal basis $d_1, \dots, d_m$ and for every $k$, we have $b_k = \pm d_j$ for some $j$. Let $V$ be the $\R$-span of $b_1, \dots, b_m$, $v_0 = b_k$, and $W$ the $\R$-span of $b_1, \dots, b_{k-1}, b_{k+1}, \dots, b_m$. Let $L_W = \displaystyle\sum_{i \neq k} \Z b_i$ and $L = L_W + \Z v_0$. The hypotheses of Prop.~\ref{prop:one-vector-orthogonal} are satisfied, and so $b_k = \pm d_j$ for some $j$, as required.
\end{proof}

\begin{proposition}\label{prop:orthog-lattice-lambda_i}
Let $\Lambda(B)$ be a lattice with orthogonal basis $B=\{b_1,\ldots,b_m\}$ satisfying $\|{b_i}\|\leq \|{b_{i+1}}\|$ for $i = 1, \dots, m-1$. Then for each $i$, $\lambda_i(\Lambda) = \norm{b_i}$.
\end{proposition}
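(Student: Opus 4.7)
The plan is to show the two natural inequalities $\lambda_i(\Lambda) \leq \|b_i\|$ and $\lambda_i(\Lambda) \geq \|b_i\|$, both of which fall out directly from the orthogonality of the $b_i$ and the ordering assumption.

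First I would note that $b_1, \dots, b_i$ are $i$ linearly independent vectors in $\Lambda$, and by the ordering hypothesis each has norm at most $\|b_i\|$. Thus $b_1, \dots, b_i \in \Lambda \cap B_{\|b_i\|}$, so $\dim(\mathrm{Span}(\Lambda \cap B_{\|b_i\|})) \geq i$, which gives $\lambda_i(\Lambda) \leq \|b_i\|$ immediately from the definition.

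For the other direction, I would argue that for every $r < \|b_i\|$ we have $\Lambda \cap B_r \subseteq \mathrm{Span}(b_1, \dots, b_{i-1})$, which forces $\dim(\mathrm{Span}(\Lambda \cap B_r)) \leq i-1$ and hence $\lambda_i(\Lambda) \geq \|b_i\|$. To justify the containment, take $v \in \Lambda \cap B_r$ and write $v = \sum_k \alpha_k b_k$ with $\alpha_k \in \Z$. By orthogonality,
\[
  \|v\|^2 = \sum_{k=1}^m \alpha_k^2 \|b_k\|^2.
\]
If some $\alpha_k \neq 0$ with $k \geq i$, then $\|v\|^2 \geq \|b_k\|^2 \geq \|b_i\|^2 > r^2$, contradicting $\|v\| \leq r$. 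Hence $\alpha_k = 0$ for all $k \geq i$, so $v$ lies in the span of $b_1, \dots, b_{i-1}$.

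There is no real obstacle here; the argument is essentially the same one already used in the proof of Lemma~\ref{lemma:lambda1-b1}, just applied to the truncation at index $i$ rather than index $1$. The only thing to be careful about is that the defining minimum for $\lambda_i$ is over closed balls, which is why the strict inequality $r < \|b_i\|$ is used to rule out smaller candidates while equality $r = \|b_i\|$ is allowed to realize the bound.
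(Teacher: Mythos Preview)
Your argument is correct and uses the same key computation as the paper: expand $\|v\|^2 = \sum_k \alpha_k^2 \|b_k\|^2$ by orthogonality and observe that a nonzero coefficient $\alpha_k$ with $k \geq i$ forces $\|v\| \geq \|b_i\|$. The only difference is organizational. The paper proceeds by induction on $i$, invoking Lemma~\ref{lemma:lambda1-b1} as the base case and then splitting into the subcases $\|b_{i+1}\| = \|b_i\|$ and $\|b_{i+1}\| > \|b_i\|$; your version establishes both inequalities $\lambda_i \leq \|b_i\|$ and $\lambda_i \geq \|b_i\|$ directly for every $i$, which avoids the induction and the case split entirely. This is a mild streamlining rather than a genuinely different idea, but it does make the proof shorter and more transparent.
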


\begin{proof}
  We use induction on $i$. The $i = 1$ case is just Lemma~\ref{lemma:lambda1-b1}. Now suppose $\lambda_j = \|b_j\|$ for $j = 1, \dots, i$. If $\|b_{i+1}\| = \|b_{i}\|$, then the closed ball $B_{\lambda_{i}}$ contains $b_1, \dots, b_{i+1}$, and therefore
  \[
    \dim (\Lambda_\R \cap B_{\lambda_i}) \geq i+1.
  \]
  In particular, this means $\lambda_{i+1} \leq \lambda_i$. But \emph{a priori} $\lambda_{i+1} \geq \lambda_i$. Therefore $\lambda_{i+1} = \lambda_i = \|b_i\| = \|b_{i+1}\|$, as required.

  Now suppose that $\|b_{i+1}\| > \|b_i\|$. Given $v \in \Lambda$, we have $v = \sum \alpha_j b_j$ for some $\alpha_j \in \Z$, so $\|v\|^2 = \sum \alpha_j^2 \|b_j\|^2$. If $\|v\| < \|b_{i+1}\|$, then $\alpha_j = 0$ for all $j \geq i+1$. Thus for $r < \|b_{i+1}\|$, we have
  \[
    \dim (\Lambda_\R \cap B_{r}) \leq i.
  \]
  But if $r = \|b_{i+1}\|$, then $b_1, b_2, \dots, b_{i+1} \in B_r$, and hence $\dim (\Lambda_\R \cap B_r) \geq i+1$. Therefore $\lambda_{i+1} = \|b_{i+1}\|$.
\end{proof}

\begin{lemma}\label{lemma:rank2LatticeNotOrthogonal}
Let $\Lambda$ be a rank two lattice. Suppose $w_1, w_2 \in \Lambda$ are linearly independent with $\lambda_1=\norm{w_1}$, $\lambda_2 = \norm{w_2}$, and $w_1$ not orthogonal to $w_2$. Then $\Lambda$ is not orthogonal.
\end{lemma}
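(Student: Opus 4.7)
The plan is to argue by contradiction. Suppose $\Lambda$ is orthogonal, with orthogonal basis $b_1, b_2$ ordered so that $\|b_1\| \leq \|b_2\|$. By Proposition~\ref{prop:orthog-lattice-lambda_i}, we then have $\lambda_1(\Lambda) = \|b_1\|$ and $\lambda_2(\Lambda) = \|b_2\|$. I would then write $w_1 = \alpha_1 b_1 + \alpha_2 b_2$ and $w_2 = \beta_1 b_1 + \beta_2 b_2$ with $\alpha_i, \beta_i \in \Z$, and exploit orthogonality of the $b_i$ to expand the squared norms as $\|w_1\|^2 = \alpha_1^2 \|b_1\|^2 + \alpha_2^2 \|b_2\|^2$ and similarly for $w_2$.

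Next I would split into two cases based on whether $\|b_1\| < \|b_2\|$ or $\|b_1\| = \|b_2\|$. In the strict-inequality case, the equation $\alpha_1^2 \|b_1\|^2 + \alpha_2^2 \|b_2\|^2 = \|b_1\|^2$ forces $\alpha_2 = 0$ (since otherwise $\alpha_2^2 \|b_2\|^2 \geq \|b_2\|^2 > \|b_1\|^2$) and then $\alpha_1 = \pm 1$, so $w_1 = \pm b_1$. For $w_2$, the same kind of argument, together with the fact that $w_2$ parallel to $b_1$ would violate linear independence with $w_1$, forces $|\beta_2| = 1$ and then $\beta_1 = 0$, giving $w_2 = \pm b_2$.

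In the equal-norm case $\|b_1\| = \|b_2\| =: c$, the equations reduce to $\alpha_1^2 + \alpha_2^2 = 1$ and $\beta_1^2 + \beta_2^2 = 1$, so each of $w_1, w_2$ is one of $\pm b_1, \pm b_2$. Linear independence then forces $\{w_1, w_2\}$ to consist of one vector from each pair, so $w_1 \perp w_2$.

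In either case we obtain $w_1 \perp w_2$, contradicting the hypothesis that $w_1$ is not orthogonal to $w_2$. The argument is essentially a direct calculation, so there is no real obstacle; the one point requiring care is ruling out the possibility $\beta_2 = 0$ in the first case, which is handled by observing that $w_2 \in \Z b_1$ would be parallel to $w_1 = \pm b_1$ and thus violate linear independence.
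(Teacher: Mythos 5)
Your proof is correct and follows essentially the same route as the paper's: assume an orthogonal basis $b_1,b_2$ with $\|b_1\|\leq\|b_2\|$, invoke Proposition~\ref{prop:orthog-lattice-lambda_i} to identify $\|b_i\|$ with $\lambda_i$, expand $\|w_1\|^2$ and $\|w_2\|^2$ in the orthogonal basis, and split into the cases $\|b_1\|<\|b_2\|$ and $\|b_1\|=\|b_2\|$, deriving a contradiction with non-orthogonality (or linear independence) of $w_1,w_2$ in each. If anything, you are slightly more careful than the paper at one point: you explicitly rule out the subcase $\beta_2=0$ (where $w_2$ would be an integer multiple of $b_1$) by appealing to linear independence, whereas the paper's ``similarly'' glosses over it.
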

\begin{proof}
Suppose to the contrary that $\Lambda$ is orthogonal. Then there is a basis $B=\{b_1,b_2\}$ of $\Lambda$ with $b_1 \perp b_2$. Assume $\norm{b_1} \leq \norm{b_{2}}$. Then by Proposition~\ref{prop:orthog-lattice-lambda_i}, $\norm{b_1} = \norm{w_1}$ and $\norm{b_2}=\norm{w_2}$. Since $w_1,w_2 \in \Lambda$, we know 
\[w_1 = a_1b_1+a_2b_2 \quad \text{and} \quad w_2 = c_1b_1+c_2b_2\]
for some $a_i,c_i \in \Z$. So
\[\norm{w_1}^2=a_1^2\norm{b_1}^2 + a_2^2\norm{b_2}^2 \quad \text {and} \quad \norm{w_2}^2=c_1^2\norm{b_1}^2 + c_2^2\norm{b_2}^2\]
since $B$ is orthogonal. Suppose $\lambda_1 < \lambda_2$. Then $\norm{b_1}^2= \norm{w_1}^2$ implies $a_1=\pm 1$ and $a_2 = 0$.
Similarly, $\norm{b_2}^2=\norm{w_2}^2$ implies $c_2=\pm 1$ and $c_1 =0$. So $w_1 = \pm b_1$ and $w_2 = \pm b_2$. But this implies $w_1 \perp w_2$, which is a contradiction.

Now suppose $\lambda_1 = \lambda_2$. Then $\norm{b_2}^2 = \norm{b_1}^2 = \norm{w_1}^2$ implies $a_1=\pm 1$ and $a_2 = 0$ or $c_2=\pm 1$ and $c_1 =0$. So $w_1 = \pm b_1$ or $w_1 = \pm b_2$. Similarly, $\norm{b_1}^2 = \norm{b_2}^2=\norm{w_2}^2$ implies $c_2=\pm 1$ and $c_1 =0$ or $a_1=\pm 1$ and $a_2 = 0$. So $w_2 = \pm b_2$ or $w_2 = \pm b_1$. Hence either $w_1 \perp w_2$ or $w_1 = \pm w_2$. But either case yields a contradiction.
\end{proof}

\subsection{Biquadratic number fields}
\label{sec:biqu-numb-fields}

Recall that a \emph{number field} is a finite extension of $\Q$. A \emph{biquadratic} number field is one of the form $K = \Q(\sqrt{d_1}, \sqrt{d_2})$, $d_1,d_2 \in \Q$, for which none of $d_1, d_2, d_1d_2$ are squares in $\Q$. Such a number field is Galois over $\Q$ with Galois group $G \cong \Z/2\Z \times \Z/2\Z$. We define generators $\sigma, \tau$ for $G$ by
\begin{align*}
  \sigma\left(\sqrt{d_1}\right) &= -\sqrt{d_1}, \\ \sigma\left(\sqrt{d_2}\right) &= \sqrt{d_2}, \\
  \tau\left(\sqrt{d_1}\right) &= \sqrt{d_1}, \textrm{ and} \\ \tau\left(\sqrt{d_2}\right) &= -\sqrt{d_2}.
\end{align*}
We further assume that $d_1, d_2 > 0$, so that $K$ is totally real.

Fix an embedding of $K$ in $\R$. Define $\Log: K^\times \to \R^4$ by
\[
  \Log(x) = (\log |x|, \log |\tau(x)|, \log |\sigma(x)|, \log |\sigma\tau(x)|)
\]
where the absolute value is that from $\R$. The kernel of $\Log$ consists of the roots of unity in $K$. Since $K$ is real, the only roots of unity are $\pm 1$.

Let $\ok$ be the {ring of integers} of $K$. 
By Dirichlet's Unit Theorem~\cite[Theorem 7.3]{neukirch}, $\Log(\okx)$ is a rank $3$ lattice in $\R^4$ which is contained in the hyperplane $x_1 + x_2 + x_3 + x_4 = 0$. We call this lattice the \emph{log-unit lattice} attached to $K$. We will use the notation $\Lambda_K$ to refer to this lattice. For a general number field $K$, the log-unit lattice $\Lambda_K = \Log(\okx)$ can be defined similarly; for example, if $K$ is a real quadratic field, then $\Lambda_K$ is a rank $1$ lattice in $\R^2$. We say $\epsilon_1, \cdots, \epsilon_r \in K$ is a \emph{system of fundamental units for $K$} if $\Log(\epsilon_1), \dots, \Log(\epsilon_r)$ forms a basis for $\Lambda_K$. If $r=1$, we say $\epsilon_1$ is a fundamental unit for $K$.

Given $d_1, d_2$, let $K_1, K_2, K_3$ be $\Q(\sqrt{d_1}), \Q(\sqrt{d_2})$, and $\Q(\sqrt{d_1d_2})$ respectively. These are the three quadratic subfields of $K$, each of which is real. By Dirichlet's Unit Theorem, the associated log-unit lattice for each $K_i$ has rank $1$. Let $\epsilon_i \in K_i$ be a {fundamental unit} for $K_i$. 
The following results are due to~\cite{kubota1956uber}, but we cite \cite{mazur2004biquadratic} for convenience. As in \cite[p. 106]{mazur2004biquadratic}, we categorize real biquadratic fields into four different types, characterized by what constitutes a system of fundamental units. These types are as follows---in each case, we specify a system of fundamental units for $K$.
\begin{description}
    \item[Type I] Up to permutation of subscripts, one of
  \begin{enumerate}[(a)]
      \item $\epsilon_1, \epsilon_2, \epsilon_3$;
      \item $\sqrt{\epsilon_1}, \epsilon_2, \epsilon_3$; or
      \item $\sqrt{\epsilon_1}, \sqrt{\epsilon_2}, \epsilon_3$.
\end{enumerate}

    \item[Type II] Up to permutation of subscripts, one of
  \begin{enumerate}[(a)]
      \item $\sqrt{\epsilon_1\epsilon_2}, \epsilon_2, \epsilon_3$ or 
      \item $\sqrt{\epsilon_1\epsilon_2}, \epsilon_2, \sqrt{\epsilon_3}$.
  \end{enumerate}
    \item[Type III] $\sqrt{\epsilon_1\epsilon_2}, \sqrt{\epsilon_2\epsilon_3}, \sqrt{\epsilon_1\epsilon_3}$.
    \item[Type IV] $\sqrt{\epsilon_1\epsilon_2\epsilon_3}, \epsilon_2, \epsilon_3$.
\end{description}
Thus, for example, if $\epsilon_1, \epsilon_2, \sqrt{\epsilon_3}$ forms a system of fundamental units for $K$, then $K$ is of Type I. It is easy to check that the types are disjoint.

Kubota gave explicit families of biquadratic number fields in 3 of these types. 
\begin{theorem}[Theorem 2 of \cite{mazur2004biquadratic}] \label{thm:mazur-ullom-main}
  Let $p_1, p_2$ be distinct primes, $K = \Q(\sqrt{p_1}, \sqrt{p_2})$, and $\epsilon_1, \epsilon_2, \epsilon_3$ fundamental units for $\Q(\sqrt{p_1})$, $\Q(\sqrt{p_2})$, $\Q(\sqrt{p_1p_2})$, respectively. Let $e_i = \Nm_{K_i/\Q}(\epsilon_i)$.
  \begin{enumerate}
      \item Suppose $p_1 \equiv 1 \pmod{4}$, $p_2 \not\equiv 3 \pmod{4}$, and $e_1 = e_2 = -1$. If $e_3 = 1$, then $K$ is of type I, and $\epsilon_1, \epsilon_2, \sqrt{\epsilon_3}$ is a system of fundamental units. If $e_3 = -1$, then $K$ is of type IV.
      \item Suppose $p_1 \equiv 3 \pmod{4}$ and $p_2 = 2$. Then $K$ is of type I, and $\sqrt{\epsilon_1}, \epsilon_2, \sqrt{\epsilon_3}$ is a system of fundamental units for $K$.
      \item Suppose $p_1 \equiv p_2 \equiv 3 \pmod{4}$. Then $K$ is of type II. 
  \end{enumerate}
\end{theorem}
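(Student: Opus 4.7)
The plan is to reduce the type classification to a Kummer-theoretic question about which products $\pm\epsilon_1^{a_1}\epsilon_2^{a_2}\epsilon_3^{a_3}$ are squares in $K$, and then settle each case by combining a sign (total-positivity) analysis with an explicit algebraic identity producing a square root. The starting point is the identity, valid for every unit $u$ of $\ok$,
\[
\Nm_{K/K_1}(u)\cdot\Nm_{K/K_2}(u)\cdot\Nm_{K/K_3}(u)=u^2\cdot\Nm_{K/\Q}(u)=\pm u^2,
\]
obtained by expanding the left side as $u^3\tau(u)\sigma(u)\sigma\tau(u)=u^2\Nm_{K/\Q}(u)$. Each factor on the left is a unit of $K_i$, hence of the form $\pm\epsilon_i^{n_i}$, so $u^2=\pm\epsilon_1^{n_1}\epsilon_2^{n_2}\epsilon_3^{n_3}$; multiplying $u$ by suitable powers of the $\epsilon_i$ reduces to $n_i\in\{0,1\}$. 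Consequently, the type of $K$ is determined by which of the eight products $\pm\epsilon_1^{a_1}\epsilon_2^{a_2}\epsilon_3^{a_3}$ are squares in $K$.

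Two criteria drastically narrow the candidates. First, since $K$ is totally real, any square is totally positive, and the sign vector of $\epsilon_i$ under the four embeddings of $K$ is determined by $e_i$ (because $\sigma$ acts on $K_i$ as the nontrivial Galois automorphism whenever $\sigma$ does not fix $K_i$); this identifies which products are candidates. Second, for $\alpha\in K_3$ one uses the tower $K=K_3(\sqrt{p_1})$: writing $\sqrt{\alpha}=x+y\sqrt{p_1}$ with $x,y\in K_3$ forces $xy=0$, so either $\alpha\in(K_3^\times)^2$ (ruled out when $\alpha=\epsilon_3$ is fundamental) or $p_1\alpha\in(K_3^\times)^2$; analogous criteria apply to $\alpha\in K_1$ or $K_2$.

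The case analysis then proceeds as follows. For case (1) with $e_3=1$, the sign analysis leaves only $\epsilon_3$ as a candidate; writing $\epsilon_3=a+b\sqrt{p_1p_2}$ with $a^2-p_1p_2b^2=1$ and examining the coprime factorizations of $(a-1)(a+1)=p_1p_2b^2$, fundamentality of $\epsilon_3$ rules out the distributions in which one factor is a perfect square and the other is $p_1p_2$ times a square, leaving the distribution of the form $\{u^2 p_1,v^2 p_2\}$; one then verifies the identity
\[
\epsilon_3=(u\sqrt{p_1}+v\sqrt{p_2})^2,
\]
giving $\sqrt{\epsilon_3}\in K$ and Type I with system $\epsilon_1,\epsilon_2,\sqrt{\epsilon_3}$. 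For case (1) with $e_3=-1$, the sign analysis leaves $\epsilon_1\epsilon_2\epsilon_3$ as the sole candidate, and a similar factorization produces an explicit square root, giving Type IV. In case (2), $p_1\equiv 3\pmod 4$ automatically forces $e_1=e_3=1$ (since $-1$ is not a norm from $\Q(\sqrt{p_1})$ or $\Q(\sqrt{2p_1})$), and the factorization method yields both $\sqrt{\epsilon_1}\in K$ (via $2\epsilon_1=(u+v\sqrt{p_1})^2$) and $\sqrt{\epsilon_3}\in K$ (via $\epsilon_3=(u\sqrt{2}+v\sqrt{p_1})^2$), while $e_2=-1$ rules out $\sqrt{\epsilon_2}\in K$ by total positivity, giving Type I. In case (3), $e_1=e_2=1$ are similarly forced; the sign analysis combined with a factorization of $\epsilon_1\epsilon_2$ identifies $\sqrt{\epsilon_1\epsilon_2}$ as the unique extra unit, yielding Type II.

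The main obstacle is the explicit factorization analysis and simultaneously ruling out the competing distributions of the factors of $a\pm 1$ among the primes $p_1,p_2$, and $2$. Each hypothesis in the statement is used in full: the mod-$4$ conditions determine the parities of $a,b$ and the feasible $2$-adic distributions; the $e_i$ govern the sign analysis and identify which product is totally positive; and the hypothesis $p_2=2$ in (2) modifies the $2$-adic distributions in an essential way. The sign and Kummer-tower reductions are essentially formal, while the correct identity of the form $\alpha=(u\sqrt{p_1}+v\sqrt{p_2})^2$ or $2\alpha=(\cdots)^2$ must be paired with the correct product in each sub-case, and this pairing is the delicate step.
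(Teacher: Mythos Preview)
The paper does not give its own proof of this statement: it is quoted verbatim as ``Theorem 2 of \cite{mazur2004biquadratic}'' (itself a reformulation of results of Kubota~\cite{kubota1956uber}) and is used as a black box to deduce Theorems~\ref{thm:typeI-primes}, \ref{thm:typeII-primes}, and~\ref{thm:typeiv-primes} from Theorem~\ref{thm:all-types}. There is therefore no in-paper argument to compare your proposal against.

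That said, your outline is precisely the classical Kubota strategy that underlies the cited result: the norm identity $\prod_i \Nm_{K/K_i}(u)=u^2\Nm_{K/\Q}(u)$ showing every unit has square equal to $\pm\epsilon_1^{a_1}\epsilon_2^{a_2}\epsilon_3^{a_3}$, the total-positivity filter on candidate squares, and the explicit Pell-type factorizations of $a\pm 1$ to exhibit the required square roots in $K$. So your plan is on the right track and, fully executed, would reproduce the Kubota/Mazur--Ullom proof rather than offer an alternative route.

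Two places where your sketch would need tightening before it counts as a proof: (i) the factors $a-1$ and $a+1$ are not literally coprime when $a$ is odd (they share a factor of~$2$), and the correct $2$-adic bookkeeping---which genuinely depends on the mod-$4$ hypotheses and on whether $p_2=2$---must be carried out carefully in each sub-case; (ii) in case~(3) you assert that the sign analysis plus a factorization singles out $\sqrt{\epsilon_1\epsilon_2}$ as \emph{the} extra unit, but you must also argue that none of $\sqrt{\epsilon_i}$ individually lies in $K$ (otherwise the field would be of type~I, not~II), and your current outline does not indicate how this exclusion is made. These are exactly the ``delicate pairings'' you flag at the end, and they are where the real work lies.
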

See \cite{mazur2004biquadratic} for additional related results, including a proof that there are infinitely main fields of all 4 types.

Our main theorem is the following:
\begin{theorem}\label{thm:all-types}
  Suppose $K$ is a real biquadratic field. The log-unit lattice of $K$ is orthogonal if and only if $K$ is of type I.
\end{theorem}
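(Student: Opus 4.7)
\emph{Setup and the easy direction.} Let $v_i := \Log(\epsilon_i)$ for $i=1,2,3$. Because $\epsilon_i \in K_i$ is fixed by the nontrivial element of $G = \mathrm{Gal}(K/\Q)$ that fixes $K_i$, the vector $v_i$ lies in a distinct nontrivial character eigenspace $V_i \subset \Lambda_K \otimes \R$ of $G$. The three lines $V_1, V_2, V_3$ are pairwise orthogonal, since $G$ acts on $\R^4$ by coordinate permutations---hence by isometries---so its three distinct nontrivial isotypic components must be mutually orthogonal. For the easy direction (Type I $\Rightarrow$ orthogonal), each listed Type I system of fundamental units is $\{\epsilon_i^{1/a_i}\}$ with $a_i \in \{1,2\}$, so $\Lambda_K$ has the orthogonal basis $\{v_i/a_i\}$.

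\emph{Galois dichotomy.} Suppose $K$ is of Type II, III, or IV, and assume for contradiction that $b_1, b_2, b_3$ is an orthogonal basis of $\Lambda_K$. Each $g \in G$ acts by an isometry preserving $\Lambda_K$, so $(gb_1, gb_2, gb_3)$ is another orthogonal basis, and Proposition~\ref{prop:unique-orthogonal-basis} gives $g b_i = \pm b_{\pi_g(i)}$ for some permutation $\pi_g$. The assignment $g \mapsto \pi_g$ is a homomorphism $G \to S_3$; since $(\Z/2)^2$ does not embed in $S_3$, its image is either trivial or cyclic of order $2$, producing two cases. In Case I (image trivial) each $\R b_i$ is $G$-stable, hence equals some $V_{j(i)}$, forcing $\Lambda_K = \bigoplus_j(\Lambda_K \cap V_j)$; but a direct computation with the given fundamental-unit bases shows that $\bigoplus_j(\Lambda_K \cap V_j)$ is a \emph{proper} sublattice of $\Lambda_K$ in every non-Type I case---it misses diagonal elements like $\tfrac{1}{2}(v_i+v_j)$ (Types II and III) or $\tfrac{1}{2}(v_1+v_2+v_3)$ (Type IV)---contradicting orthogonality.

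\emph{Case II and the main obstacle.} In Case II (image of order $2$), some $\pi_g$ is a transposition with a unique fixed index $k$, so $\R b_k$ is $G$-stable and equals some $V_j$, while $b_i, b_{i'}$ ($i,i' \ne k$) span $V_j^\perp$. This forces the orthogonal splitting $\Lambda_K = (\Lambda_K \cap V_j) \oplus (\Lambda_K \cap V_j^\perp)$---equivalently $\pi_j(\Lambda_K) = \Lambda_K \cap V_j$, where $\pi_j$ is projection onto $V_j$---together with orthogonality of the rank-$2$ summand. For Types III and IV one checks $\pi_j(\Lambda_K) = \tfrac{1}{2}\Z v_j \neq \Z v_j = \Lambda_K \cap V_j$ for every $j$, so the splitting already fails and Case II is impossible. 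For Type II the splitting holds only at $j = 3$, reducing the problem to showing that the rank-$2$ lattice $\Lambda' := \Lambda_K \cap (V_1 \oplus V_2) = \Z\cdot\tfrac{1}{2}(v_1+v_2) + \Z v_2$ is not orthogonal. Applying Lemma~\ref{lemma:rank2LatticeNotOrthogonal} with $w_1, w_2$ chosen as two shortest linearly independent vectors from $\{v_1, v_2, \tfrac{1}{2}(v_1 \pm v_2)\}$---the choice depending on whether $\|v_2\|^2 \le 3\|v_1\|^2$ or not---shows $\Lambda'$ is not orthogonal whenever $\|v_1\| \ne \|v_2\|$. The main obstacle is ruling out the edge case $\|v_1\| = \|v_2\|$: taking positive fundamental units $>1$, this equality would give $\epsilon_1 = \epsilon_2 \in K_1 \cap K_2 = \Q$, contradicting the irrationality of fundamental units of real quadratic fields.
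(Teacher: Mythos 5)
Your proof is correct, but it takes a genuinely different route from the paper. The paper argues type by type: it computes, for each of types II, III, and IV, an explicit short list of candidate vectors realizing $\lambda_1$ and $\lambda_2$ (via the congruence conditions on the coefficients $u_i$ in the $w_i$-coordinates) and then checks by hand that no admissible pair of candidates is orthogonal, invoking Lemma~\ref{lemma:rank2LatticeNotOrthogonal} for type II and, for type IV, the transcendence statement of Lemma~\ref{lemma:wi-orthogonal-lengths}(b) to rule out a residual orthogonal pair. You instead exploit Galois equivariance: since $G$ acts on $\Lambda_K$ by isometries, Proposition~\ref{prop:unique-orthogonal-basis} forces a hypothetical orthogonal basis to be permuted up to sign, and the resulting homomorphism $G \to S_3$ has image of order at most $2$, so either every basis line is one of the eigenlines $V_j = \R w_j$ (refuted because $\bigoplus_j (\Lambda_K \cap V_j)$ is a proper sublattice in every non--type I case) or exactly one is, which forces the splitting $\pi_j(\Lambda_K) = \Lambda_K \cap V_j$ (refuted for types III and IV at every $j$, and reduced for type II to the non-orthogonality of the rank-$2$ piece $\Z\tfrac{1}{2}(w_1+w_2) + \Z w_2$, where you rejoin the paper's use of Lemma~\ref{lemma:rank2LatticeNotOrthogonal}). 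Your approach buys a uniform structural explanation---non--type I fields fail to be orthogonal precisely because $\Lambda_K$ does not split along the isotypic decomposition---and it avoids the shortest-vector enumerations of Lemmas~\ref{lemma:typeIII-shortest-vectors-not-orthogonal} and \ref{lemma:typeIV-shortest-vectors-not-orthogonal} as well as the appeal to Baker's theorem in type IV; the paper's approach yields more quantitative information (the actual successive minima of each lattice) as a byproduct. The computations you leave as ``one checks'' ($\Lambda_K \cap V_j = \Z w_j$ and $\pi_j(\Lambda_K) = \tfrac{1}{2}\Z w_j$ in the relevant cases) are routine and correct, and your disposal of the edge case $\norm{w_1} = \norm{w_2}$ via $K_1 \cap K_2 = \Q$ is a valid elementary substitute for Lemma~\ref{lemma:wi-orthogonal-lengths}(c).
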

Combining the above theorem and Theorem~\ref{thm:mazur-ullom-main}, we immediately obtain Theorems~\ref{thm:typeI-primes}, \ref{thm:typeII-primes}, and \ref{thm:typeiv-primes}.

Observe that 
\begin{align*}
  \tau(\epsilon_i) &=
          \begin{cases}
            \pm\epsilon_i^{-1} & i \neq 1 \\
            \pm\epsilon_1 & i = 1
          \end{cases} \\
  \sigma(\epsilon_i) &=
          \begin{cases}
            \pm\epsilon_i^{-1} & i \neq 2 \\
            \pm\epsilon_2 & i = 2
          \end{cases} \\
  (\sigma\tau)(\epsilon_i) &=
           \begin{cases}
             \pm\epsilon_i^{-1} & i \neq 3 \\
             \pm\epsilon_3 & i = 3.
           \end{cases}
\end{align*}
Let ${w}_i=\Log(\epsilon_i)$. Then
\begin{align*}
{w}_1 &= \left(\log|\epsilon_1|,\log|\epsilon_1|,-\log|\epsilon_1|,-\log|\epsilon_1|\right)
\\{w}_2 &= \left(\log|\epsilon_2|,-\log|\epsilon_2|,\log|\epsilon_2|,-\log|\epsilon_2|\right)
\\{w}_3 &= \left(\log|\epsilon_3|,-\log|\epsilon_3|,-\log|\epsilon_3|,\log|\epsilon_3|\right).
\end{align*}
\begin{lemma}\label{lemma:wi-orthogonal-lengths}
  Suppose $1 \leq i,j \leq 3$, $i \neq j$. Then
  \begin{enumerate}[(a)]
      \item $\braket{w_i,w_j} = 0$,
      \item $\frac{\log |\epsilon_i|}{\log |\epsilon_j|}$ is not algebraic over $\Q$, and
      \item $\norm{w_i} \neq \norm{w_j}$.
  \end{enumerate}
\end{lemma}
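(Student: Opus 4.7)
The plan is to handle the three parts in order, with (a) supplying the linear-independence input needed for (b), and (b) supplying the transcendence input needed for (c).

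For (a), the approach is a direct coordinate-wise computation using the explicit formulas for $w_1, w_2, w_3$ displayed just before the lemma. Writing $a_i = \log|\epsilon_i|$, each inner product $\braket{w_i, w_j}$ factors as $a_i a_j$ times the dot product of two sign patterns in $\{\pm 1\}^4$, and in each of the three pairs the sign pattern sums to $0$. This step is purely mechanical.

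For (b), the strategy is to assume $\log|\epsilon_i|/\log|\epsilon_j|$ is algebraic and derive a contradiction in two stages. Since $|\epsilon_i|$ and $|\epsilon_j|$ are nonzero algebraic numbers different from $1$, the Gelfond--Schneider theorem, in the formulation that the quotient of two such logarithms is either rational or transcendental, forces the algebraic ratio to be rational, say $p/q$ with $q \neq 0$. Then $|\epsilon_i^q \epsilon_j^{-p}| = 1$. Because $K$ is totally real and embedded in $\R$, the unit $\eta := \epsilon_i^q \epsilon_j^{-p}$ is a real number of absolute value $1$, hence $\eta = \pm 1$ as a real number, and so also in $K$ since the embedding $K \hookrightarrow \R$ is injective. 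Applying $\Log$ (which kills $\pm 1$) gives $q w_i = p w_j$. By (a), $w_i$ and $w_j$ are orthogonal and both nonzero (the latter because $\epsilon_i, \epsilon_j \neq \pm 1$), hence linearly independent over $\R$; therefore $p = q = 0$, contradicting $q \neq 0$.

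For (c), the explicit formulas show $\|w_i\|^2 = 4(\log|\epsilon_i|)^2$, so $\|w_i\| = 2|\log|\epsilon_i||$. If $\|w_i\| = \|w_j\|$, then $\log|\epsilon_i|/\log|\epsilon_j| = \pm 1 \in \Q$, contradicting (b).

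The main obstacle is (b); the two delicate points are invoking Gelfond--Schneider correctly to eliminate the irrational algebraic case, and verifying that a unit of $\ok$ of absolute value $1$ in a totally real field must equal $\pm 1$ rather than a nontrivial unit.
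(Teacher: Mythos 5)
Your proof is correct and follows essentially the same route as the paper's: a direct computation for (a), the Gelfond--Schneider/Baker transcendence input to reduce (b) to the rational case, an argument that a unit combination of absolute value $1$ in the real field must be $\pm 1$, and the identity $\norm{w_i} = 2|\log|\epsilon_i||$ for (c). The only (harmless) divergence is that you rule out the rational case by invoking the orthogonality from (a), whereas the paper argues that $\epsilon_i,\epsilon_j$ are infinite-order units in linearly disjoint quadratic subfields; both close the same gap.
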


\begin{proof}
  The first claim is straightforward.

  For the second claim, if $\frac{\log |\epsilon_i|}{\log |\epsilon_j|} \in \Q$, then $\exists a, b \in \Z$, not both zero, such that
  \[
    a\log |\epsilon_i| = b\log |\epsilon_j|.
  \]
  Thus $\log |\epsilon_i^a\epsilon_j^{-b}| = 0$, and so $\epsilon_i^a\epsilon_j^{-b}$ is a root of unity in $K$. Since $K$ is totally real, $\epsilon_i^a\epsilon_j^{-b} = \pm 1$. But $\epsilon_i, \epsilon_j$ are units of infinite order in the linearly disjoint fields $K_i,K_j$, respectively. Thus $a = b = 0$. This contradicts our choice of $a,b$, and hence $\frac{\log |\epsilon_i|}{\log |\epsilon_j|} \notin \Q$. The claim then follows from linear independence of logarithms~\cite{baker1966}.

  Since $\norm{w_i} = 2|\log |\epsilon_i||$, the last claim follows.
\end{proof}

Theorem~\ref{thm:all-types} follows from considering each type individually. To be precise, we will show that log-unit lattices for type I fields are orthogonal in Theorem~\ref{thm:typeI-orthogonal}, and that log-unit lattices for types II, III, and IV are not orthogonal in Theorems~\ref{thm:typeII-not-orthogonal}, \ref{thm:typeIII-not-orthogonal}, and \ref{thm:typeIV-not-orthogonal}, respectively. The key fact we will be using is that $\Lambda(w_1, w_2, w_3)$ is an orthogonal sublattice of the log-unit lattice $\Lambda_K$. Through \S{}6, we will assume that $K$ is a real biquadratic field given by $\Q(\sqrt{d_1}, \sqrt{d_2})$, and that the $\epsilon_i, w_i$ are as above.

\section{Type I lattices}
\label{sec:type-i-lattices}

\begin{theorem}\label{thm:typeI-orthogonal}
  Suppose $K$ is a real biquadratic field of type I. Then $\Lambda_K$ is orthogonal.
\end{theorem}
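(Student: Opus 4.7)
The plan is to observe that orthogonality of $\Lambda_K$ for a type I field is essentially immediate from the classification, once one invokes Lemma~\ref{lemma:wi-orthogonal-lengths}(a). By definition of type I, there is a system of fundamental units for $K$ which, up to permutation of the indices $1,2,3$, has one of the three forms $(\epsilon_1,\epsilon_2,\epsilon_3)$, $(\sqrt{\epsilon_1},\epsilon_2,\epsilon_3)$, or $(\sqrt{\epsilon_1},\sqrt{\epsilon_2},\epsilon_3)$. Since $\Log$ is a group homomorphism on $K^\times$ with kernel $\{\pm 1\}$, we have $\Log(\sqrt{\epsilon_i}) = \tfrac{1}{2}\Log(\epsilon_i) = \tfrac{1}{2}w_i$ whenever $\sqrt{\epsilon_i}\in K$. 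Thus in each of the three cases, the $\Log$-image of the given system of fundamental units is a basis of $\Lambda_K$ whose vectors are nonzero scalar multiples of the $w_i$.

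The key step is then to invoke Lemma~\ref{lemma:wi-orthogonal-lengths}(a), which tells us that $w_1,w_2,w_3$ are pairwise orthogonal. Rescaling each vector by a nonzero scalar preserves pairwise orthogonality, so in all three sub-cases the resulting basis of $\Lambda_K$ is orthogonal. Since the type I classification is invariant under permutation of the indices, the same conclusion holds after any relabeling. Therefore $\Lambda_K$ admits an orthogonal basis in every type I subcase, and so $\Lambda_K$ is an orthogonal lattice.

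There is no real obstacle: the argument reduces the statement to the orthogonality of the $w_i$ already established, plus the elementary observation that scaling preserves orthogonality. The only minor care needed is to ensure that the basis one writes down is genuinely a $\Z$-basis of $\Lambda_K$, which is precisely the content of the definition of \emph{system of fundamental units} given in the preliminaries.
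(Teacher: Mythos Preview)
Your proof is correct and follows the same approach as the paper: in each type I subcase the given system of fundamental units has $\Log$-image $\{w_1,w_2,w_3\}$, $\{\tfrac{1}{2}w_1,w_2,w_3\}$, or $\{\tfrac{1}{2}w_1,\tfrac{1}{2}w_2,w_3\}$ (up to permutation), and these are orthogonal by Lemma~\ref{lemma:wi-orthogonal-lengths}(a). You are slightly more explicit than the paper in justifying $\Log(\sqrt{\epsilon_i})=\tfrac{1}{2}w_i$, but the argument is the same.
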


\begin{proof}
  Recall the vectors $w_1, w_2, w_3$, which are pairwise orthogonal. If $K$ is of type Ia, then $\Lambda_K = \Lambda(w_1, w_2, w_3)$. If $K$ is of type Ib, then up to renumbering, $\Lambda_K = \Lambda(w_1/2, w_2, w_3)$. If $K$ is of type Ic, then up to renumbering, $\Lambda_K = \Lambda(w_1/2, w_2/2, w_3)$. In all three cases, the given basis is orthogonal.  
%
\end{proof}

\begin{proof}[Proof of Corollary~\ref{cor:typeI-covering-radius-bounds}]
  Let $R_i = \ln\abs{\varepsilon_i}$ for all $i$. By replacing $\epsilon_i$ with its inverse if necessary, we may assume that $R_i > 0$.

  If we assume hypothesis $(1)$, then by Theorem~\ref{thm:mazur-ullom-main} we may set $v_1 = w_1$, $v_2 = w_2$, and $v_3 = \frac{1}{2}w_3$. Then $\Lambda_K = \Lambda(v_1,v_2,v_3)$. As the $w_i$ are orthogonal, so are the $v_i$. 

  If we assume hypothesis $(2)$, then by Theorem~\ref{thm:mazur-ullom-main}, we may set $v_1 = \frac{1}{2}w_1$, $v_2 = w_2$, and $v_3 = \frac{1}{2}w_3$. Then $\Lambda_K = \Lambda(v_1,v_2,v_3)$. The $v_i$ are orthogonal in this case as well.

By~\cite[p. 212]{MR1387478}, $\tilde{\Delta}_i \leq R_i$. By \cite[p. 329]{hua1982nt}, $R_i < \hat{\Delta}_i$. Since $\norm{w_i} = 2 R_i$, the claim follows.
\end{proof}

\section{Type II lattices}
\label{sec:type-ii-lattices}

Suppose $K$ is of type II. We will assume that either $\sqrt{\epsilon_1\epsilon_2}, \epsilon_2, \epsilon_3$ or $\sqrt{\epsilon_1\epsilon_2}, \epsilon_2, \sqrt{\epsilon_3}$ forms a system of fundamental units; the cases where we permute the subscripts will follow by an identical argument. Since in cases IIa and IIb, $\sqrt{\epsilon_1\epsilon_2}, \epsilon_1, \epsilon_3$ and $\sqrt{\epsilon_1\epsilon_2}, \epsilon_1, \sqrt{\epsilon_3}$ respectively also forms a system of fundamental units, we may assume that $\|w_1\| < \|w_2\|$. Let
\begin{align*}
  v_1 &= \frac{1}{2}(w_1 + w_2) \textrm{ and}\\
  \tilde{v}_2 &= w_2.
\end{align*}
If $K$ is of type IIa, let $v_3 = w_3$; otherwise let $v_3 = \frac{1}{2}w_3$. We have $\Lambda_K = \Lambda(v_1, \tilde{v}_2, v_3)$. It will be convenient to set $v_2 = \frac{1}{2}(w_2 - w_1)$; then $\Lambda_K = \Lambda(v_1, v_2, v_3)$.


\begin{lemma}\label{lemma:typeii-sublattice}
The sublattice $\Lambda(v_1,v_2) \subseteq \Lambda_K$ is not orthogonal.
\end{lemma}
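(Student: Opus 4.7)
The plan is to apply Lemma~\ref{lemma:rank2LatticeNotOrthogonal}: I will exhibit two linearly independent vectors in $\Lambda(v_1,v_2)$ whose lengths realize $\lambda_1,\lambda_2$ yet that fail to be mutually orthogonal.

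First, I would describe $\Lambda(v_1,v_2)$ in the orthogonal frame $(w_1,w_2)$. The identity
\[
a v_1 + b v_2 = \tfrac{a-b}{2}\,w_1 + \tfrac{a+b}{2}\,w_2
\]
identifies $\Lambda(v_1,v_2)$ with the set of vectors $m w_1 + n w_2$ such that $m,n \in \tfrac12\Z$ and $m \equiv n \pmod{\Z}$. Since $w_1 \perp w_2$, every squared length in this lattice takes the form $m^2\|w_1\|^2 + n^2\|w_2\|^2$. A routine expansion also records
\[
\|v_1\|^2 = \|v_2\|^2 = \tfrac14\bigl(\|w_1\|^2+\|w_2\|^2\bigr), \qquad \langle v_1,v_2\rangle = \tfrac14\bigl(\|w_2\|^2-\|w_1\|^2\bigr),
\]
and the latter is nonzero by Lemma~\ref{lemma:wi-orthogonal-lengths}(c).

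Next I would identify the successive minima. Enumerating the admissible $(m,n)$ of smallest coordinates shows that the only competitors for the shortest vector, up to sign, are $w_1$, $w_2$, and the equal-length pair $v_1 = \tfrac12(w_1+w_2)$, $v_2 = \tfrac12(w_2-w_1)$; every other admissible $(m,n)$ is strictly longer. Because $\|w_1\| < \|w_2\|$, only the comparison between $\|w_1\|^2$ and $\tfrac14(\|w_1\|^2+\|w_2\|^2)$ is delicate, and it reduces to $\|w_2\|^2 \gtrless 3\|w_1\|^2$. The equality case is ruled out by Lemma~\ref{lemma:wi-orthogonal-lengths}(b), since it would force $\|w_2\|/\|w_1\| = \sqrt{3}$, which is algebraic.

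If $\|w_2\|^2 > 3\|w_1\|^2$, then $\lambda_1 = \|w_1\|$ realized by $w_1 = v_1 - v_2$ and $\lambda_2 = \|v_1\|$ realized by $v_1$; since $\langle w_1, v_1\rangle = \tfrac12\|w_1\|^2 \neq 0$, these are not orthogonal. If instead $\|w_2\|^2 < 3\|w_1\|^2$, then $\lambda_1 = \lambda_2 = \|v_1\| = \|v_2\|$ realized by the linearly independent pair $v_1, v_2$, which is nonorthogonal by the computation above. Either way, Lemma~\ref{lemma:rank2LatticeNotOrthogonal} forces $\Lambda(v_1,v_2)$ to be non-orthogonal. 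The main obstacle is the case split: one must carefully verify that no other half-integer lattice point beats the three listed candidates, and that the algebraic threshold $\sqrt{3}$ is genuinely excluded by the transcendence statement; beyond that, everything reduces to orthogonal-expansion bookkeeping.
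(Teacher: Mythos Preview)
Your proof is correct and follows essentially the same route as the paper: express $\Lambda(v_1,v_2)$ in the orthogonal frame $(w_1,w_2)$, identify the short-vector candidates $\{\pm v_1,\pm v_2,\pm w_1,\pm w_2\}$, discard $w_2$ as too long, and feed a non-orthogonal pair realizing $\lambda_1,\lambda_2$ into Lemma~\ref{lemma:rank2LatticeNotOrthogonal}.

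The one structural difference is your case split on $\|w_2\|^2 \gtrless 3\|w_1\|^2$ and the appeal to Lemma~\ref{lemma:wi-orthogonal-lengths}(b) to exclude equality. The paper sidesteps this entirely: having shown $\lambda_1,\lambda_2$ are realized inside $\{v_1,v_2,w_1\}$, it simply checks that \emph{every} pair drawn from that set has nonzero inner product ($\langle v_i,w_1\rangle=\pm\tfrac12\|w_1\|^2$ and $\langle v_1,v_2\rangle=\tfrac14(\|w_2\|^2-\|w_1\|^2)$, the latter nonzero by part~(c)). This handles both of your cases, and the borderline $\|w_2\|^2=3\|w_1\|^2$, uniformly without needing transcendence. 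Your version is a little more explicit about which vectors actually achieve the successive minima, which is informative, but the transcendence detour is avoidable.
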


\begin{proof}
Let $\Lambda = \Lambda(v_1,v_2) \subseteq \Lambda_K$. Note that
\begin{align*}
w_1 &= v_1 - v_2 
\textrm{ and} \\w_2 &= v_1 + v_2. 
\end{align*}
Since $w_1 \perp w_2$ and
$v_1=\frac{1}{2}(w_1+w_2)$, we have
\[\norm{v_1}^2 = \frac{1}{4}\norm{w_1}^2 + \frac{1}{4}\norm{w_2}^2.\]
Recall that $\norm{w_1} < \norm{w_2}$, so
\[\frac{1}{4}\norm{w_1}^2 + \frac{1}{4}\norm{w_2}^2 < \frac{1}{4}\norm{w_2}^2 + \frac{1}{4}\norm{w_2}^2 < \norm{w_2}^2.\]
Thus $\norm{v_1} < \norm{w_2}$.

Now let $x \in \Lambda \setminus \{0\}$. Then $x = a_1v_1+a_2v_2$ for some $a_i \in \Z$ with at least one nonzero $a_i$. Observe that
\[x = a_1\left(\frac{1}{2}(w_1+w_2)\right) + a_2\left(\frac{1}{2}(-w_1+w_2)\right) = \frac{1}{2}(a_1-a_2)w_1 + \frac{1}{2}(a_1+a_2)w_2.\]
Thus
\[\norm{x}^2 = \frac{1}{4}(a_1-a_2)^2\norm{w_1}^2 + \frac{1}{4}(a_1+a_2)^2\norm{w_2}^2\]
since $w_1 \perp w_2$. Let $u_1= a_1-a_2$ and $u_2 = a_1+a_2$. To minimize $\norm{x}$, we would like to minimize $\abs{u_1}$ and  $\abs{u_2}$, where $u_1, u_2 \in \Z$. 
As $u_1 - u_2 = -2a_2$ and $a_2\in \Z$, we have $u_1\equiv u_2 \mod 2$. To minimize $\|x\|$, we must have one of the following: $u_1 = u_2 = \pm 1$; or $u_1 = - u_2 = \pm 1$; or $u_1 = \pm 2, u_2 = 0$; or $u_1 = 0, u_2 = \pm 2$. In other words, $x \in \{\pm v_1, \pm v_2, \pm w_1, \pm w_2\}$. Observe that $\norm{v_1} = \norm{v_2}$, $\norm{w_2} > \norm{w_1},$ and $\norm{w_2} > \norm{v_1}$, so $\lambda_1(\Lambda) = \norm{b_1}$ and $\lambda_2(\Lambda) = \norm{b_2}$ for some $b_1,b_2 \in \{v_1, v_2, w_1\}$. But $\braket{v_i,w_1} = \pm \frac{1}{2}\norm{w_1}^2 \neq 0$ for $i = 1,2$, and $\braket{v_1, v_2} = \frac{1}{4}(\norm{w_2}^2 - \norm{w_1}^2) \neq 0$. Therefore $b_1$ cannot be orthogonal to $b_2$. By Lemma~\ref{lemma:rank2LatticeNotOrthogonal}, $\Lambda$ is not orthogonal.
\end{proof}

\begin{theorem}\label{thm:typeII-not-orthogonal}
  Suppose $K$ is a real biquadratic field of type II. Then $\Lambda_K$ is not orthogonal.
\end{theorem}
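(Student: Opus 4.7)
The plan is to combine Lemma~\ref{lemma:typeii-sublattice} with Proposition~\ref{prop:one-vector-orthogonal} in order to push the non-orthogonality of the rank-2 sublattice $\Lambda(v_1,v_2)$ up to the full log-unit lattice $\Lambda_K = \Lambda(v_1,v_2,v_3)$. The key geometric input is that the third basis vector $v_3$ lies in the orthogonal complement of the plane spanned by $v_1$ and $v_2$. This is immediate from the definitions: $v_1$ and $v_2$ are $\mathbb{R}$-linear combinations of $w_1$ and $w_2$, while $v_3$ is a scalar multiple of $w_3$, and by Lemma~\ref{lemma:wi-orthogonal-lengths}(a) we have $\langle w_3, w_1\rangle = \langle w_3, w_2\rangle = 0$.

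With this in hand, suppose for contradiction that $\Lambda_K$ admits an orthogonal basis $b_1,b_2,b_3$. I would take $V = (\Lambda_K)_{\mathbb{R}}$, $v_0 = v_3$, $W$ the orthogonal complement in $V$ of $\mathrm{span}(v_3)$, and $L_W = \Lambda(v_1,v_2) \subseteq W$. Then $L_W + \mathbb{Z} v_3 = \Lambda_K$, so Proposition~\ref{prop:one-vector-orthogonal} applies and forces $v_3 = \pm b_i$ for some $i$; after relabeling, take $v_3 = \pm b_3$.

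Next I would argue that $b_1, b_2$ is forced to be an orthogonal basis for $\Lambda(v_1,v_2)$. Since $b_1$ and $b_2$ are orthogonal to $b_3 = \pm v_3$, they lie in $W = \mathrm{span}(v_1, v_2)$, hence in $\Lambda_K \cap W = \Lambda(v_1,v_2)$. Conversely, because $b_1, b_2, b_3$ generates $\Lambda_K$ and $b_3 = \pm v_3$, every element of $\Lambda(v_1,v_2)$ is a $\mathbb{Z}$-combination of $b_1$ and $b_2$. Thus $\Lambda(v_1,v_2) = \mathbb{Z} b_1 + \mathbb{Z} b_2$ with $b_1 \perp b_2$, making $\Lambda(v_1,v_2)$ orthogonal, which directly contradicts Lemma~\ref{lemma:typeii-sublattice}.

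I do not anticipate a serious obstacle: the orthogonality of $v_3$ to $\mathrm{span}(v_1,v_2)$ is a one-line check, and the rest is a bookkeeping exercise in applying Proposition~\ref{prop:one-vector-orthogonal} and restricting an orthogonal basis to a coordinate hyperplane. The only point to handle with care is verifying that $b_1, b_2$ really do generate the sublattice $\Lambda(v_1,v_2)$ rather than a proper sublattice or overlattice, but this follows from the explicit decomposition $\Lambda_K = \Lambda(v_1,v_2) \oplus \mathbb{Z} v_3$ that the construction of the $v_i$ already gives us.
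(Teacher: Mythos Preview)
Your proposal is correct and follows essentially the same argument as the paper: apply Proposition~\ref{prop:one-vector-orthogonal} with $v_0 = v_3$ to force one orthogonal basis vector to be $\pm v_3$, then observe that the remaining two orthogonal basis vectors must form an orthogonal basis of $\Lambda(v_1,v_2)$, contradicting Lemma~\ref{lemma:typeii-sublattice}. Your write-up is in fact a bit more careful than the paper's in justifying that $b_1,b_2$ generate exactly $\Lambda(v_1,v_2)$ (via $\Lambda_K \cap W = \Lambda(v_1,v_2)$), which the paper leaves implicit.
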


\begin{proof}
  Suppose $(b_1, b_2, b_3)$ is an orthogonal basis for $\Lambda_K$. Let $v_1, v_2, v_3$ be as above. Let $W$ be the $\R$-linear span of $v_1$ and $v_2$. Note that $v_3$ is orthogonal to $W$. By Proposition~\ref{prop:one-vector-orthogonal}, one of the $b_i$---say, $b_3$---equals $\pm v_3$. But then $(b_1, b_2)$ must form an orthogonal basis for $\Lambda(v_1,v_2)$, which contradicts Lemma~\ref{lemma:typeii-sublattice}.
\end{proof}

\section{Type III lattices}
\label{sec:type-iii-lattices}

Let $K$ be a type III real biquadratic number field. Define new vectors $\hat{w}_1, \hat{w}_2, \hat{w}_3$ as being the ${w}_i$ in increasing order by length; that is, $\{w_1,w_2,w_3\} = \{\hat{w}_1,\hat{w}_2,\hat{w}_3\}$, and $\norm{\hat{w}_1} < \norm{\hat{w}_2} < \norm{\hat{w}_3}$.
Let
\begin{align*}
  v_1 &= \frac{1}{2}\left(\hat{w}_2+\hat{w}_3\right), \\
  v_2 &= \frac{1}{2}\left(\hat{w}_1+\hat{w}_3\right), \textrm{ and} \\
  v_3 &= \frac{1}{2}\left(\hat{w}_1+\hat{w}_2\right).
\end{align*}
 Let $B = \{v_1, v_2, v_3\}$. By definition of type III, $B$ forms a basis for $\Lambda_K$. We have $\left<v_1,v_2\right> = \frac{1}{4} \norm{\hat{w}_3}^2$, $\left<v_1,v_3\right> = \frac{1}{4} \norm{\hat{w}_2}^2,$ and $\left<v_2,v_3\right> = \frac{1}{4} \norm{\hat{w}_1}^2$ since the $\hat{w}_i$ are pairwise orthogonal. Hence $B$ is not orthogonal. Our goal is to show that there is no orthogonal basis for $\Lambda_K$. 
 
Let $x \in \Lambda_K$. Then $x = \sum_{i=1}^3 a_iv_i$ for $a_i \in \Z$. So
\begin{align*}
x &= a_1\left(\frac{1}{2}(\hat{w}_2+\hat{w}_3)\right)+ a_2\left(\frac{1}{2}(\hat{w}_1+\hat{w}_3)\right) + a_3\left(\frac{1}{2}(\hat{w}_1+\hat{w}_2) \right)
\\&= \left(\frac{a_2+a_3}{2}\right)\hat{w}_1 +\left(\frac{a_1+a_3}{2}\right)\hat{w}_2 + \left(\frac{a_1+a_2}{2}\right)\hat{w}_3.
\end{align*}
Write $u_1 = a_2 + a_3$, $u_2 = a_1 + a_3$, and $u_3 = a_1 + a_2$. Then 
\begin{equation}
x = \sum_{i=1}^3 \frac{u_i}{2}\hat{w}_i. \label{eq:1}
\end{equation}

\begin{lemma}\label{lemma:typeIIIC=H}
Let
\[C = \left\{(u_1,u_2,u_3) \in \Z^3 \mid \sum_{i=1}^3 u_i \equiv 0 \mod 2\right\}.\]
Then $C$ is generated by $(2,0,0),(0,2,0),(0,0,2),(1,1,0),(1,0,1),$ and $(0,1,1)$.
\end{lemma}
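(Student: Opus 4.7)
The plan is to prove the two inclusions $H \subseteq C$ and $C \subseteq H$, where $H$ denotes the subgroup of $\Z^3$ generated by the six listed vectors. The inclusion $H \subseteq C$ is immediate since each generator has coordinate sum in $\{0, 2\}$, hence even, and $C$ is closed under $\Z$-linear combinations.

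For the reverse inclusion, I would argue by parity casework on the tuple $(u_1, u_2, u_3) \in C$. Since $u_1 + u_2 + u_3$ is even, either all three coordinates are even, or exactly one is even (equivalently, exactly two are odd). In the all-even case, write
\[
(u_1, u_2, u_3) = \tfrac{u_1}{2}(2,0,0) + \tfrac{u_2}{2}(0,2,0) + \tfrac{u_3}{2}(0,0,2),
\]
which is manifestly in $H$. In the mixed-parity case, suppose for instance that $u_1, u_2$ are odd and $u_3$ is even. Then $(u_1, u_2, u_3) - (1,1,0)$ has all even coordinates and even sum, so lies in the subgroup generated by the first three vectors by the previous step, and adding back $(1,1,0) \in H$ gives $(u_1, u_2, u_3) \in H$. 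The remaining two subcases (where $u_2$ or $u_1$ is the even coordinate) are handled identically by subtracting $(1,0,1)$ or $(0,1,1)$, respectively.

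This is entirely elementary, so there is no real obstacle; the only thing to be careful about is organizing the parity cases cleanly so that each of the three ``mixed'' generators $(1,1,0), (1,0,1), (0,1,1)$ gets used in exactly one subcase. The lemma is essentially the statement that the six vectors give a (redundant) generating set for the index-$2$ subgroup of $\Z^3$ cut out by the parity homomorphism $\Z^3 \to \Z/2\Z$, $(u_1,u_2,u_3) \mapsto u_1 + u_2 + u_3 \bmod 2$, and the case analysis above is precisely a concrete verification of this.
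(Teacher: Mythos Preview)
Your proof is correct; the paper itself declines to give any argument, simply writing ``This is an easy exercise.'' Your parity casework is exactly the natural way to fill in that exercise, so there is nothing to compare.
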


\begin{proof}
  This is an easy exercise.
\end{proof}

\begin{lemma}\label{lemma:typeIIIC=S}
Let $C$ be as above. Then
\[C = \left\{(u_1,u_2,u_3) \in \Z^3 \mid \sum_{i=1}^3 \frac{u_i}{2}\hat{w}_i \in \Lambda_K\right\}.\]
\end{lemma}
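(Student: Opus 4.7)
The plan is to prove the claim by double containment, using Lemma~\ref{lemma:typeIIIC=H} for one direction and the unique expansion in the orthogonal family $\hat{w}_1,\hat{w}_2,\hat{w}_3$ for the other. Throughout, note that since the $\hat{w}_i$ are pairwise orthogonal and nonzero, they are linearly independent in $\R^4$, so every element of $\Lambda_{K,\R}$ has a unique expression as an $\R$-linear combination of them.

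To show $C$ is contained in the set on the right, I would use Lemma~\ref{lemma:typeIIIC=H} and check that each of the six listed generators of $C$ corresponds to an element of $\Lambda_K$. Concretely, the generators $(1,1,0)$, $(1,0,1)$, $(0,1,1)$ recover exactly $v_3$, $v_2$, $v_1$ respectively from the definitions, and the generators $(2,0,0)$, $(0,2,0)$, $(0,0,2)$ correspond to $\hat{w}_1,\hat{w}_2,\hat{w}_3$, which are integer combinations of the $v_i$ via $\hat{w}_1 = v_2+v_3-v_1$, $\hat{w}_2 = v_1+v_3-v_2$, $\hat{w}_3 = v_1+v_2-v_3$. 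Since $\Lambda_K = \Lambda(v_1,v_2,v_3)$ is closed under integer linear combinations, every element of $C$ produces an element of $\Lambda_K$ via $\sum \tfrac{u_i}{2}\hat{w}_i$.

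For the reverse inclusion, suppose $(u_1,u_2,u_3) \in \Z^3$ with $x := \sum \tfrac{u_i}{2}\hat{w}_i \in \Lambda_K$. Then $x = \sum a_i v_i$ for some $a_i \in \Z$, and the computation leading to equation~\eqref{eq:1} shows that $x$ also equals $\tfrac{a_2+a_3}{2}\hat{w}_1 + \tfrac{a_1+a_3}{2}\hat{w}_2 + \tfrac{a_1+a_2}{2}\hat{w}_3$. By the uniqueness of the expansion in the $\hat{w}_i$, we obtain $u_1 = a_2+a_3$, $u_2 = a_1+a_3$, $u_3 = a_1+a_2$, whence $u_1+u_2+u_3 = 2(a_1+a_2+a_3) \equiv 0 \pmod 2$. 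Thus $(u_1,u_2,u_3) \in C$.

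There is no real obstacle here; the only subtlety is invoking linear independence of the $\hat{w}_i$ to justify matching coefficients in the two $\hat{w}_i$-expansions of $x$, which is immediate from pairwise orthogonality and nonvanishing (the latter a consequence of $\epsilon_1,\epsilon_2,\epsilon_3$ being units of infinite order). Together, the two containments give the stated equality.
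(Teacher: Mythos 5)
Your proof is correct and follows essentially the same route as the paper's: double containment, with the generators from Lemma~\ref{lemma:typeIIIC=H} handling one inclusion and the coefficient comparison $u_1 = a_2+a_3$, $u_2 = a_1+a_3$, $u_3 = a_1+a_2$ handling the other. Your explicit appeal to the uniqueness of the $\hat{w}_i$-expansion (via their pairwise orthogonality) is a point the paper leaves implicit, but it is the same argument.
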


\begin{proof}
Write $S$ for the right-hand set above, and let $u = (u_1,u_2,u_3) \in S$. Recall that $u_1 = a_2+a_3$, $u_2 = a_1+a_3$, and $u_3 = a_1+a_2$ for some $a_i \in \Z$. So
\[\sum_{i=1}^3 u_i = 2(a_1+a_2+a_3) \equiv 0 \mod 2.\]
Thus, $u \in C$. To show that $C \subseteq S$ we simply show that the generators of $C$, as in Lemma~\ref{lemma:typeIIIC=H}, are in $S$. 
Note that $0\hat{w}_1+\frac{1}{2}\hat{w}_2+\frac{1}{2}\hat{w}_3 = v_1, \frac{1}{2}\hat{w}_1+0\hat{w}_2+\frac{1}{2}\hat{w}_3 = v_2$, and $\frac{1}{2}\hat{w}_1+\frac{1}{2}\hat{w}_2+0\hat{w}_3 = v_3$ are clearly in  $\Lambda_K(v_1,v_2,v_3)$. So $(0,1,1),(1,0,1),(1,1,0) \in S$. By a similar calculation, $(2,0,0),(0,2,0),(0,0,2)$ correspond to $\hat{w}_1,\hat{w}_2,$ and $\hat{w}_3$, respectively.
\end{proof}

\begin{lemma}\label{lemma:typeIII-pairwise-lin-ind}
Let $\mathscr{S}=\set{v_1,  v_2,  v_3,  v_1',   v_2',  v_3',  \hat{w}_1,  \hat{w}_2}$, 
where 
    \[v_1' = \frac{1}{2}\left(\hat{w}_2-\hat{w}_3\right), \quad v_2' = \frac{1}{2}\left(\hat{w}_1-\hat{w}_3\right), \quad \textnormal{and} \quad v_3' =\frac{1}{2}\left(\hat{w}_1-\hat{w}_2\right).\]
 
    Let $x,y \in \mathscr{S}$ with $x\neq y$. If $\alpha_1 x + \alpha_2 y=0$, then $\alpha_1 = \alpha_2=0$.
\end{lemma}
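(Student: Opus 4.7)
The plan is to exploit the fact that $\hat{w}_1, \hat{w}_2, \hat{w}_3$ are pairwise orthogonal nonzero vectors---orthogonality follows from Lemma~\ref{lemma:wi-orthogonal-lengths}(a) and nonzeroness from part (c)---so they form an orthogonal, and in particular $\R$-linearly independent, basis for their $\R$-span in $\R^4$. Every element of $\mathscr{S}$ lies in that span, and writing each in coordinates with respect to $(\hat{w}_1, \hat{w}_2, \hat{w}_3)$ gives
\begin{align*}
v_1 &= (0, \tfrac{1}{2}, \tfrac{1}{2}), & v_1' &= (0, \tfrac{1}{2}, -\tfrac{1}{2}), \\
v_2 &= (\tfrac{1}{2}, 0, \tfrac{1}{2}), & v_2' &= (\tfrac{1}{2}, 0, -\tfrac{1}{2}), \\
v_3 &= (\tfrac{1}{2}, \tfrac{1}{2}, 0), & v_3' &= (\tfrac{1}{2}, -\tfrac{1}{2}, 0), \\
\hat{w}_1 &= (1, 0, 0), & \hat{w}_2 &= (0, 1, 0).
\end{align*}

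The key observation I would use is that two nonzero vectors in this coordinate system can be scalar multiples of one another only if they share the same support. I would then classify the elements of $\mathscr{S}$ by support: $\{1\}$ for $\hat{w}_1$; $\{2\}$ for $\hat{w}_2$; $\{2,3\}$ for $v_1$ and $v_1'$; $\{1,3\}$ for $v_2$ and $v_2'$; and $\{1,2\}$ for $v_3$ and $v_3'$. For any pair $x, y \in \mathscr{S}$ with distinct supports, the relation $\alpha_1 x + \alpha_2 y = 0$ projected onto a coordinate lying in $\mathrm{supp}(x) \setminus \mathrm{supp}(y)$ forces $\alpha_1 = 0$, and the symmetric projection forces $\alpha_2 = 0$.

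That leaves only the three same-support pairs $(v_i, v_i')$. For $i=1$, reading the $\hat{w}_2$- and $\hat{w}_3$-coefficients of $\alpha_1 v_1 + \alpha_2 v_1' = 0$ gives $\tfrac{1}{2}(\alpha_1 + \alpha_2) = 0$ and $\tfrac{1}{2}(\alpha_1 - \alpha_2) = 0$, which together force $\alpha_1 = \alpha_2 = 0$. The arguments for $i = 2, 3$ are structurally identical after relabeling the relevant coordinates. The only real ``obstacle'' here is bookkeeping: there are $\binom{8}{2} = 28$ a priori pairs, but the support classification collapses them into two uniform cases (different-support and same-support), keeping the argument short.
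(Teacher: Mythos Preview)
Your proof is correct and follows the same approach as the paper: express each element of $\mathscr{S}$ in coordinates with respect to the linearly independent system $\hat{w}_1,\hat{w}_2,\hat{w}_3$ and observe that no two are parallel. The paper leaves the verification as routine, whereas your support-based classification is a clean way to organize the $\binom{8}{2}$ pairs into two uniform cases.
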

\begin{proof}
  One checks each case. Alternatively, each element of $\mathscr{S}$ can be written in terms of the $\hat{w}_i$, and from these representations it is clear that no two are parallel.
\end{proof}

\begin{lemma}\label{lemma:typeIII-shortest-vectors-not-orthogonal}
There exist
\[b_1, b_2 \in \mathscr{S}=\left\{ v_1,  v_2,  v_3,  v_1',   v_2',  v_3',  \hat{w}_1,  \hat{w}_2\right\}\]
such that $\lambda_1(\Lambda_K)=\norm{b_1}$ and $\lambda_2(\Lambda_K) = \norm{b_2}$. 
\end{lemma}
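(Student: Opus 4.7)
The plan is to parametrize every element of $\Lambda_K$ via the triples $(u_1,u_2,u_3)\in C$ from Lemma~\ref{lemma:typeIIIC=S} and equation~\eqref{eq:1}, and then carry out a constrained minimization of $\norm{x}^2 = \tfrac{1}{4}(u_1^2\ell_1 + u_2^2\ell_2 + u_3^2\ell_3)$, where $\ell_i := \norm{\hat{w}_i}^2$ satisfy $\ell_1<\ell_2<\ell_3$. The squared norms realized by $\mathscr{S}$ are $\ell_1$, $\ell_2$, $(\ell_1+\ell_2)/4$ (attained by $v_3,v_3'$), $(\ell_1+\ell_3)/4$ (attained by $v_2,v_2'$), and $(\ell_2+\ell_3)/4$ (attained by $v_1,v_1'$). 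I aim to show that both $\lambda_1^2$ and $\lambda_2^2$ lie in $\{\ell_1,(\ell_1+\ell_2)/4\}$, each of which is attained in $\mathscr{S}$.

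The core of the argument is a case analysis on the number of nonzero entries of $(u_1,u_2,u_3)$. If exactly one $u_i$ is nonzero, parity forces $|u_i|\geq 2$, so $\norm{x}^2\geq \ell_i\geq \ell_1$, with equality iff $x=\pm\hat{w}_1$; in particular $\pm\hat{w}_3$, corresponding to $(0,0,\pm 2)$, has $\norm{\hat{w}_3}^2=\ell_3>\ell_1$ and so is not a candidate for the first two minima. If exactly two entries are nonzero they must share parity: the odd-odd subcase yields $\norm{x}^2\geq (\ell_i+\ell_j)/4\geq (\ell_1+\ell_2)/4$, with equality only for $x\in\{\pm v_3,\pm v_3'\}$, while the even-even subcase gives $\norm{x}^2\geq \ell_i+\ell_j$, strictly larger than $(\ell_1+\ell_2)/4$. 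If all three entries are nonzero, parity excludes the triple $|u_1|=|u_2|=|u_3|=1$ since $\pm 1\pm 1\pm 1$ is odd; hence at least one $|u_i|\geq 2$, and minimizing shows $\norm{x}^2\geq \ell_1+(\ell_2+\ell_3)/4$, which strictly exceeds both $\ell_1$ (since $\ell_2+\ell_3>0$) and $(\ell_1+\ell_2)/4$ (since $3\ell_1+\ell_3>0$).

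Combining these bounds gives $\lambda_1^2=\min\{\ell_1,(\ell_1+\ell_2)/4\}$, which is attained in $\mathscr{S}$ by $\hat{w}_1$ if $\ell_2\geq 3\ell_1$ and by $v_3$ (equivalently $v_3'$) if $\ell_2\leq 3\ell_1$; choose $b_1$ accordingly. For $\lambda_2$, note that $\hat{w}_1$, $v_3$, and $v_3'$ are pairwise linearly independent by Lemma~\ref{lemma:typeIII-pairwise-lin-ind}. When $\ell_2>3\ell_1$, the next-smallest squared norm in $\Lambda_K$ is $(\ell_1+\ell_2)/4$, realized by $v_3$, so set $b_2=v_3$. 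When $\ell_2<3\ell_1$, both $v_3$ and $v_3'$ realize $\lambda_1$, so $\lambda_2=\lambda_1$ and we may take $b_2$ to be the element of $\{v_3,v_3'\}$ not chosen as $b_1$. The boundary case $\ell_2=3\ell_1$ falls under either choice.

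The main obstacle is the all-three-nonzero case, since such vectors could a priori compete with $\mathscr{S}$ for the first two minima. The key point is that the parity constraint $u_1+u_2+u_3\equiv 0\pmod 2$ forbids the all-$\pm 1$ triple, forcing one coordinate to contribute a factor of $4$ rather than $1$ to $\norm{x}^2$, which yields the crucial lower bound $\ell_1+(\ell_2+\ell_3)/4$ and comfortably separates these vectors from the $\mathscr{S}$-candidates for $\lambda_1$ and $\lambda_2$.
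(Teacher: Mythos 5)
Your proof is correct and follows essentially the same route as the paper: parametrize lattice vectors by $(u_1,u_2,u_3)\in C$ via Lemmas~\ref{lemma:typeIIIC=H} and~\ref{lemma:typeIIIC=S}, minimize $\frac{1}{4}\sum u_i^2\norm{\hat{w}_i}^2$, and invoke Lemma~\ref{lemma:typeIII-pairwise-lin-ind} for linear independence. Your parity-based case analysis on the number of nonzero coordinates (in particular ruling out the all-odd triple) makes explicit a minimization step the paper states more briefly, and additionally pins down exactly which elements of $\mathscr{S}$ realize $\lambda_1$ and $\lambda_2$ according to whether $\norm{\hat{w}_2}^2\gtrless 3\norm{\hat{w}_1}^2$, which is more than the lemma requires.
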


\begin{proof}
Let $x \in \Lambda_K \setminus \{0\}$, and let $a_i, u_i$ be as in~\eqref{eq:1}.  Then $\norm{x}^2 = \frac{1}{4}\sum_{i=1}^3 u_i^2 \norm{\hat{w}_i}^2$ since $\hat{w}_i \perp \hat{w}_j$ for all $i \neq j$. To minimize $\norm{x}$, we must find $(u_1, u_2, u_3) \in C$ whose coordinates have minimal absolute value. It follows from Lemma~\ref{lemma:typeIIIC=H} and Lemma~\ref{lemma:typeIIIC=S} that $\norm{x}$ is minimized when $(\abs{u_1},\abs{u_2}, \abs{u_3})$ is in the list
\[(1,1,0),\ (1,0,1),\ (0,1,1),\ (2,0,0),\ (0,2,0),\ \text{and }(0,0,2).\]
In particular, $\left(\abs{u_1}, \abs{u_2}, \abs{u_3}\right) = (1,1,0)$ gives $\pm v_3$, $\pm v_3'$, $\left(\abs{u_1}, \abs{u_2}, \abs{u_3}\right) = (1,0,1)$ gives $\pm v_2$, $\pm v_2'$, and $\left(\abs{u_1}, \abs{u_2}, \abs{u_3}\right) = (0,1,1)$ gives $\pm v_1$, $\pm v_1'$. Similarly, $(2,0,0)$, $(0,2,0)$, $(0,0,2)$ give $\pm \hat{w}_1$, $\pm \hat{w}_2$, and $\pm \hat{w}_3$, respectively. Since $\hat{w}_i \perp \hat{w}_j$ for $i \neq j$, $\norm{v_1}^2 = \frac{1}{4}\left(\norm{\hat{w}_2}^2 + \norm{\hat{w}_3}^2\right)$. Furthermore, $\norm{\hat{w}_2} < \norm{\hat{w}_3}$, so
\[\frac{1}{4}\left(\norm{\hat{w}_2}^2 + \norm{\hat{w}_3}^2\right) < \frac{1}{4}\left(\norm{\hat{w}_3}^2 + \norm{\hat{w}_3}^2\right) < \norm{\hat{w}_3}^2.\]
Thus $\norm{v_1} < \norm{\hat{w}_3}$. Note that $\norm{v_i} = \norm{v_i'}$ for all $i$, and $\norm{v_3} < \norm{v_2} < \norm{v_1}$. In particular $\norm{\hat{w}_3}$ is larger than the norm of every element of $\mathscr{S}$. 
    By Lemma \ref{lemma:typeIII-pairwise-lin-ind}, all pairs $b,b' \in \mathscr{S} $ with $b \neq b'$ are linearly independent. The result follows.
\end{proof}

\begin{theorem}\label{thm:typeIII-not-orthogonal}
Let $K$ be a type III real biquadratic number field.
Then $\Lambda_K$ is not orthogonal.
\end{theorem}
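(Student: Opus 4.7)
The plan is to assume, for contradiction, that $\Lambda_K$ admits an orthogonal basis, which I would order as $(b_1,b_2,b_3)$ with $\norm{b_1}\le \norm{b_2}\le \norm{b_3}$; Proposition~\ref{prop:orthog-lattice-lambda_i} then identifies $\norm{b_i}=\lambda_i(\Lambda_K)$. The key preparatory observation is that in any orthogonal lattice, a nonzero vector $x=\sum \alpha_i b_i$ with $\alpha_i\in\Z$ satisfies $\norm{x}^2=\sum \alpha_i^2\norm{b_i}^2 \ge \lambda_1^2\sum \alpha_i^2$, so every nonzero vector of norm exactly $\lambda_1$ must equal $\pm b_i$ for some $i$ with $\norm{b_i}=\lambda_1$. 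Combining this with Lemma~\ref{lemma:typeIII-shortest-vectors-not-orthogonal} and the inequalities $\norm{v_3}<\norm{v_2}<\norm{v_1}$ and $\norm{\hat{w}_1}<\norm{\hat{w}_2}$ forces $\lambda_1=\min(\norm{v_3},\norm{\hat{w}_1})$, so I would split into two cases depending on which minimum is attained.

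In the first case, $\lambda_1=\norm{v_3}$, so both $v_3$ and $v_3'$ lie in $\Lambda_K$ with norm $\lambda_1$. By the observation above, each equals some $\pm b_i$; Lemma~\ref{lemma:typeIII-pairwise-lin-ind} implies the two indices are distinct, so orthogonality of the basis forces $v_3\perp v_3'$. However, Lemma~\ref{lemma:wi-orthogonal-lengths}(c) gives $\braket{v_3,v_3'}=\tfrac{1}{4}(\norm{\hat{w}_1}^2-\norm{\hat{w}_2}^2)\neq 0$, a contradiction.

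In the second case, $\lambda_1=\norm{\hat{w}_1}$, so $\hat{w}_1=\pm b_k$ for some $k$. Writing $v_3=\sum_i \alpha_i b_i$ with $\alpha_i\in\Z$ and pairing with $b_k$, the pairwise orthogonality of the basis gives
\[\alpha_k \norm{\hat{w}_1}^2 \;=\; \alpha_k \norm{b_k}^2 \;=\; \braket{v_3,b_k} \;=\; \pm\braket{v_3,\hat{w}_1} \;=\; \pm \tfrac{1}{2}\norm{\hat{w}_1}^2,\]
forcing $\alpha_k=\pm \tfrac{1}{2}\notin \Z$, a contradiction.

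The main conceptual step is spotting that the second case dispatches via a single inner product against $\hat{w}_1$---the point being that although $\hat{w}_1\in \Lambda_K$, the ``half-sum'' $v_3=\tfrac{1}{2}(\hat{w}_1+\hat{w}_2)\in \Lambda_K$ forces a non-integer coefficient along any basis element parallel to $\hat{w}_1$---rather than having to enumerate and rule out each orthogonal pair inside $\mathscr{S}$ one by one.
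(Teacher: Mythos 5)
Your proof is correct, and its endgame differs from the paper's in a way worth noting. Both arguments share the same skeleton: assume an orthogonal basis $b_1,b_2,b_3$ ordered by length, apply Proposition~\ref{prop:orthog-lattice-lambda_i} and Lemma~\ref{lemma:typeIII-shortest-vectors-not-orthogonal} to locate the successive minima inside $\mathscr{S}$. The paper then uses the full strength of that lemma --- both $\pm b_1$ and $\pm b_2$ lie in $\mathscr{S}$ --- lists the five orthogonal pairs available in $\mathscr{S}$, and kills each one by observing that $v_3$ is shorter than the longer member of the pair, contradicting $\norm{b_2}=\lambda_2$. You instead use only the $\lambda_1$ information: either $v_3$ is shortest, in which case the two tied-length vectors $v_3,v_3'$ must both be (distinct, by Lemma~\ref{lemma:typeIII-pairwise-lin-ind}) signed basis vectors yet fail to be orthogonal since $\braket{v_3,v_3'}=\tfrac14(\norm{\hat w_1}^2-\norm{\hat w_2}^2)\neq 0$; or $\hat w_1=\pm b_k$ is shortest, in which case expanding $v_3=\sum\alpha_i b_i$ and pairing against $b_k$ forces $\alpha_k=\pm\tfrac12\notin\Z$. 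Your route avoids any appeal to $\lambda_2$ and any enumeration of orthogonal pairs in $\mathscr{S}$; the half-integer coefficient obstruction in your second case is the genuinely new ingredient (it resembles the irrationality-of-coefficients argument the paper deploys for type IV, not anything in its type III proof). The paper's enumeration, by contrast, is more mechanical but reuses machinery it needs anyway for the other types. One small bookkeeping point: your identification $\lambda_1=\min(\norm{v_3},\norm{\hat w_1})$ does require the existence half of Lemma~\ref{lemma:typeIII-shortest-vectors-not-orthogonal} (that $\lambda_1$ is attained on $\mathscr{S}$), which you do invoke; with that in place the two cases are exhaustive and the argument is complete.
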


\begin{proof}
    Suppose otherwise. Then $\Lambda_K$ has an orthogonal basis, say, $B=\set{b_1,b_2,b_3}$. Without loss of generality, $\norm{b_1} \leq \norm{b_2} \leq \norm{b_3}$. Then by Proposition~\ref{prop:orthog-lattice-lambda_i}, $\lambda_i(\Lambda_K)=\norm{b_i}$ for $i = 1,2,3$, and by Lemma~\ref{lemma:typeIII-shortest-vectors-not-orthogonal},
    \[\pm b_1, \pm b_2 \in \mathscr{S} = \set{v_1,v_2,v_3,v_1',v_2',v_3',\hat{w}_1,\hat{w}_2},\]
    where the $v_i'$ are as in Lemma~\ref{lemma:typeIII-pairwise-lin-ind}. But the only pairs of orthogonal vectors in $\mathscr{S}$ are
    \begin{align*}
      \hat{w}_1, \hat{w_2};& \\
      \hat{w}_1, v_1;& \, \hat{w_1}, v_1'; \\
      \hat{w}_2, v_2;& \textrm{ and } \hat{w_2}, v_2'.
    \end{align*}
    But in each case, $v_3$ is shorter than the second element of each pair, and so none of these pairs can equal $\pm b_1, \pm b_2$. Therefore no such $B$ can exist, proving the claim.
\end{proof}

\section{Type IV lattices}
\label{sec:type-iv-lattices}

Suppose $K$ is of type IV. As before, define $\hat{w}_1, \hat{w}_2, \hat{w}_3$ as vectors satisfying $\{w_1,w_2,w_3\} = \{\hat{w}_1,\hat{w}_2,\hat{w}_3\}$, and $\norm{\hat{w}_1} < \norm{\hat{w}_2} < \norm{\hat{w}_3}$. Let $v_1 = \frac{1}{2}\sum_{i=1}^3 \hat{w}_i$, $v_2 = \hat{w}_2$, and $v_3 = \hat{w}_3$. Let $B = \{v_1, v_2, v_3\}$, so that $B$ forms a basis for $\Lambda_K$. We have that $\braket{v_1,v_2}=\frac{1}{2}\norm{\hat{w}_2}^2$, $\braket{v_1,v_3}=\frac{1}{2}\norm{\hat{w}_3}^2$, and $\braket{v_2,v_3}=0$ since the $\hat{w}_i$ are pairwise orthogonal. Hence $B$ is not orthogonal.
 
Let $x \in \Lambda_K$. Then $x = \sum_{i=1}^3 a_iv_i$ for $a_i \in \Z$. So
\begin{align}\label{eq:2}
    x &= a_1\left(\frac{1}{2}\sum_{i=1}^3 \hat{w}_i\right)+ a_2\hat{w}_2 + a_3\hat{w}_3 \notag \\ 
      &= \frac{a_1}{2}\hat{w}_1 + \left(\frac{a_1+2a_2}{2}\right)\hat{w}_2 + \left(\frac{a_1+2a_3}{2}\right)\hat{w}_3.
\end{align}
Write $u_1 = a_1$, $u_2 = a_1 + 2a_3$, and $u_3 = a_1 + 2a_2$. 

\begin{lemma}\label{lemma:typeIVC=H}
Let
\[C = \{(u_1,u_2,u_3) \in \Z^3 \mid u_1 \equiv u_2 \equiv u_3 \mod 2\}.\]
Then $C$ is generated by $(2,0,0),(0,2,0),(0,0,2)$ and $(1,1,1)$.
\end{lemma}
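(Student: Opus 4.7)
The plan is to prove the set equality by showing both containments: every listed generator lies in $C$, and every element of $C$ is a $\Z$-linear combination of the generators.

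First, I would observe that $C$ admits a very clean description: an element $(u_1,u_2,u_3) \in \Z^3$ lies in $C$ precisely when either all three coordinates are even or all three are odd. From this it is immediate that $(2,0,0)$, $(0,2,0)$, $(0,0,2)$, and $(1,1,1)$ all lie in $C$, giving one containment. To see $C$ is closed under $\Z$-linear combinations (so that checking the generators suffices), note it is the kernel of the homomorphism $\Z^3 \to (\Z/2\Z)^2$ sending $(u_1,u_2,u_3)$ to $(u_1-u_2, u_1-u_3)$ mod $2$.

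For the reverse containment, I would split into two cases based on the common parity of the coordinates. If all $u_i$ are even, write
\[
(u_1,u_2,u_3) = \tfrac{u_1}{2}(2,0,0) + \tfrac{u_2}{2}(0,2,0) + \tfrac{u_3}{2}(0,0,2),
\]
which is a $\Z$-linear combination of the first three generators. If all $u_i$ are odd, then $(u_1,u_2,u_3) - (1,1,1) = (u_1-1, u_2-1, u_3-1)$ has all even coordinates, so by the previous case it lies in the span of $(2,0,0), (0,2,0), (0,0,2)$; adding back $(1,1,1)$ expresses $(u_1,u_2,u_3)$ in the required form.

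There is no real obstacle here; the proof is essentially the two-line case split above, which is why the authors likely call it an easy exercise. The only mild subtlety is recognizing the parity characterization of $C$ (i.e., that "$u_1 \equiv u_2 \equiv u_3 \pmod 2$" means all even or all odd), after which both cases reduce to trivial arithmetic. I would keep the write-up to a few sentences mirroring the structure above.
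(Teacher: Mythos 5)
Your proof is correct and follows essentially the same route as the paper's: both show the generators lie in $C$ and then split the reverse containment into the all-even and all-odd cases, subtracting $(1,1,1)$ in the latter. The only addition is your explicit remark that $C$ is a subgroup (as a kernel), which the paper leaves implicit.
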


\begin{proof}
Let $H < \Z^3$ be generated by $(2,0,0),(0,2,0),(0,0,2)$ and $(1,1,1)$. Certainly $H \subset C$. Let $(u_1, u_2, u_3) \in C$, so $u_i \in \Z$ with $u_1 \equiv u_2 \equiv u_3 \mod 2$. If each $u_i \equiv 0 \mod 2,$ then $(u_1, u_2, u_3) \in 2\Z^3 \subseteq H$. If each $u_i \equiv 1 \mod 2$, then $u_i = 2n_i + 1$ for some $n_i \in \Z$. Then
\[(u_1, u_2, u_3) = (2n_1, 2n_2, 2n_3) + (1, 1, 1)\]
which lies in $H$.
\end{proof}

\begin{lemma}\label{lemma:typeIVC=S}
Let $C$ be as above. Then
\[C = \{(u_1,u_2,u_3) \in \Z^3 \mid \sum_{i=1}^3 \frac{u_i}{2}\hat{w}_i \in \Lambda_K\}.\]
\end{lemma}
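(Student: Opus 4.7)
The plan is to follow the template established by Lemma~\ref{lemma:typeIIIC=S} and verify both inclusions $S \subseteq C$ and $C \subseteq S$, where $S$ denotes the set on the right-hand side of the asserted equality.

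For $S \subseteq C$, I would take $(u_1, u_2, u_3) \in S$, so that $x := \sum_{i=1}^{3} \frac{u_i}{2}\hat{w}_i$ lies in $\Lambda_K$. Because $B = \{v_1, v_2, v_3\}$ is a basis for $\Lambda_K$, we may write $x = \sum_i a_i v_i$ for unique integers $a_i$. Expanding via equation~\eqref{eq:2} and equating coefficients of the pairwise orthogonal (hence linearly independent) vectors $\hat{w}_1, \hat{w}_2, \hat{w}_3$ gives $u_1 = a_1$ while $u_2$ and $u_3$ each lie in $a_1 + 2\Z$. Reducing modulo $2$ yields $u_1 \equiv u_2 \equiv u_3 \pmod{2}$, placing $(u_1, u_2, u_3) \in C$.

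For $C \subseteq S$, Lemma~\ref{lemma:typeIVC=H} supplies the explicit generating set $\{(2,0,0),\,(0,2,0),\,(0,0,2),\,(1,1,1)\}$, so it suffices to exhibit each generator as an element of $S$. The triples $(2,0,0), (0,2,0), (0,0,2)$ correspond under the map $(u_1, u_2, u_3) \mapsto \sum \frac{u_i}{2}\hat{w}_i$ to $\hat{w}_1, \hat{w}_2, \hat{w}_3$ respectively; each $\hat{w}_i$ equals $\Log(\epsilon_j)$ for some $j$ and therefore lies in $\Lambda_K$. The triple $(1,1,1)$ corresponds to $\frac{1}{2}(\hat{w}_1 + \hat{w}_2 + \hat{w}_3) = v_1$, which lies in $\Lambda_K$ by construction. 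This completes both inclusions.

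I do not anticipate any substantive obstacle: the main content is packaged in Lemma~\ref{lemma:typeIVC=H}, and what remains is the routine dictionary between $(u_1,u_2,u_3)$-coordinates and the basis $B$. The only care required is in tracking which $a_i$ enters each $u_j$, but since the defining condition of $C$ is symmetric under permutation of $u_2$ and $u_3$, any ambiguity in that pairing is harmless.
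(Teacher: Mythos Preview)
Your proposal is correct and follows essentially the same approach as the paper: both directions are handled identically, first reading off $u_1=a_1$ and $u_2,u_3\in a_1+2\Z$ from equation~\eqref{eq:2} to get $S\subseteq C$, and then checking that each generator of $C$ from Lemma~\ref{lemma:typeIVC=H} maps to a lattice element (namely $\hat{w}_1,\hat{w}_2,\hat{w}_3,v_1$) to get $C\subseteq S$. Your closing remark about the harmless $u_2\leftrightarrow u_3$ ambiguity is apt, since the paper itself swaps the pairing between the text following~\eqref{eq:2} and the coefficients appearing in~\eqref{eq:2}.
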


\begin{proof}
  Let $S$ be the right-hand set in the statement, and let $x \in S$. Then $x  = (u_1, u_2, u_3)$ for some $u_i \in \Z$ with $\sum_{i=1}^3 \frac{u_i}{2}\hat{w}_i \in \Lambda_K$. Recall that $u_1 = a_1,\ u_2 = a_1 + 2a_3$ and $u_3 = a_1 + 2a_2$ for some $a_i \in \Z$. If $a_1 \equiv 0 \mod 2$, then each $u_i \equiv 0 \mod 2$. If $a_1 \equiv 1 \mod 2$, then each $u_i \equiv 1 \mod 2$. In either case, $x \in C$. For the other inclusion, it suffices to show that the generators of $C$ are in $S$; this is a routine calculation. 
\end{proof}

\begin{lemma}\label{lemma:typeIV-pairwise-lin-ind}
    Let $\mathscr{S}=\set{v_1, v_1', v_1'', v_1''',  \hat{w}_1, \hat{w}_2}$, where   
    \begin{align*}
    v_1' &= v_1 - \hat{w}_1 \\ &= - \frac{1}{2}\hat{w}_1 + \frac{1}{2}\hat{w}_2 + \frac{1}{2}\hat{w}_3,\\
v_1'' &=  v_1 - \hat{w}_2 \\ &= \frac{1}{2}\hat{w}_1 - \frac{1}{2}\hat{w}_2 + \frac{1}{2}\hat{w}_3, \textnormal{ and}
\\
v_1''' &= v_1 - \hat{w}_3 \\ &= \frac{1}{2}\hat{w}_1 + \frac{1}{2}\hat{w}_2 - \frac{1}{2}\hat{w}_3.
    \end{align*}
    Let $x,y \in \mathscr{S}$ with $x\neq y$. Then $\alpha_1 x + \alpha_2 y=0$ implies $\alpha_1,\alpha_2=0$.
\end{lemma}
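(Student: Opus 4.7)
The plan is to rewrite every element of $\mathscr{S}$ as an explicit $\R$-linear combination of $\hat{w}_1, \hat{w}_2, \hat{w}_3$, and then observe that no two of the resulting coordinate tuples are scalar multiples of one another. Since the $\hat{w}_i$ are nonzero and pairwise orthogonal---hence linearly independent---the question of whether $\alpha_1 x + \alpha_2 y = 0$ forces $\alpha_1 = \alpha_2 = 0$ reduces to a purely numerical check on the coordinate vectors of $x$ and $y$ expressed in the $\hat{w}$-basis.

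I would organize the $\binom{6}{2} = 15$ pairs into three groups. First, for any two of $v_1, v_1', v_1'', v_1'''$, every coordinate in the $\hat{w}$-basis has absolute value $\tfrac12$, so any scalar relation would force the scalar to be $\pm 1$; one then just notes that among the sign patterns $(+,+,+)$, $(-,+,+)$, $(+,-,+)$, $(+,+,-)$ no two are equal and no two are coordinate-wise negatives of each other. Second, $\hat{w}_1$ and $\hat{w}_2$ have coordinate vectors $(1,0,0)$ and $(0,1,0)$, which are plainly independent. Third, any mixed pair consisting of one $v$-type vector and one $\hat{w}_i$ is independent because each $v$-type vector has all three $\hat{w}$-coordinates nonzero while $\hat{w}_1, \hat{w}_2$ each have two vanishing coordinates.

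There is no real obstacle here: the lemma is a finite case check, and once each element of $\mathscr{S}$ is written in the $\hat{w}$-basis, pairwise independence is immediate. The only point requiring any care is to organize the case analysis so that it dispatches all fifteen pairs in a few lines rather than one at a time, which is precisely what the grouping above accomplishes.
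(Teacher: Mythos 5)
Your proposal is correct and takes essentially the same approach as the paper: express each element of $\mathscr{S}$ in the $\hat{w}$-basis and reduce pairwise independence to a check on coordinate tuples, using linear independence of the $\hat{w}_i$. The paper works out only the pair $v_1, v_1'$ explicitly and asserts the remaining cases are similar, whereas your grouping of the fifteen pairs dispatches them all at once; this is a cleaner write-up of the same argument.
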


\begin{proof}
Suppose $x = v_1$ and $y = v_1'$. Observe that
\begin{align*}
0 &= \alpha_1 v_1 + \alpha_2 v_1'
\\&= \alpha_1 \left(\frac{1}{2} \left(\hat{w}_1 + \hat{w}_2 + \hat{w}_3\right)\right) + \alpha_2 \left(\frac{1}{2} \left(-\hat{w}_1 + \hat{w}_2 + \hat{w}_3\right)\right)
\\&= \frac{\left(\alpha_1 - \alpha_2\right)}{2} \hat{w}_1 + \frac{\left(\alpha_1 + \alpha_2\right)}{2} \hat{w}_2 + \frac{\left(\alpha_1 + \alpha_2\right)}{2} \hat{w}_3
\end{align*}
Multiplying by $2$ yields
\[0 =\left(\alpha_1 - \alpha_2\right) \hat{w}_1 + \left(\alpha_1 + \alpha_2\right) \hat{w}_2 + \left(\alpha_1 + \alpha_2\right) \hat{w}_3.\]
But $\{\hat{w}_1, \hat{w}_2, \hat{w}_3\}$ is a linearly independent set, so $\alpha_1 \pm \alpha_2 = 0$. Hence $\alpha_1 = \alpha_2 = 0$. For any other $x,y \in \mathscr{S}$, a similiar argument holds. 
\end{proof}

\begin{lemma}\label{lemma:typeIV-shortest-vectors-not-orthogonal}
There exist
    \[b_1, b_2 \in \mathscr{S}=\set{v_1, v_1', v_1'', v_1''',  \hat{w}_1, \hat{w}_2}\]
such that $\lambda_1(\Lambda_K)=\norm{b_1}$ and $\lambda_2(\Lambda_K)= \norm{b_2}$.
\end{lemma}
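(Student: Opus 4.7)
The plan is to enumerate the short nonzero vectors of $\Lambda_K$ using the parametrization in~\eqref{eq:2} together with Lemmas~\ref{lemma:typeIVC=H} and~\ref{lemma:typeIVC=S}, then verify that both $\lambda_1(\Lambda_K)$ and $\lambda_2(\Lambda_K)$ are realized by elements of $\mathscr{S}$. Set $W_i := \norm{\hat{w}_i}^2$, so $W_1 < W_2 < W_3$. Every $x \in \Lambda_K$ has the form $x = \sum_{i=1}^3 (u_i/2)\hat{w}_i$ with $(u_1,u_2,u_3) \in C$, and orthogonality of the $\hat{w}_i$ gives $\norm{x}^2 = \tfrac{1}{4}\sum_{i=1}^3 u_i^2 W_i$. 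Lemma~\ref{lemma:typeIVC=H} forces all $u_i$ to share a common parity, splitting the nonzero elements of $C$ into an all-even class and an all-odd class.

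First I would minimize $\tfrac{1}{4}\sum u_i^2 W_i$ in each class separately. In the all-even class any nonzero coordinate satisfies $|u_i|\geq 2$, so the minimum is $W_1$, attained only at $(\pm 2,0,0)$, i.e., $\pm\hat{w}_1$. In the all-odd class each $|u_i|\geq 1$, so the minimum is $N_v := \tfrac{1}{4}(W_1+W_2+W_3)$, attained on the eight vectors $(\pm 1,\pm 1,\pm 1)$, i.e., $\pm v_1,\pm v_1',\pm v_1'',\pm v_1'''$. Therefore $\lambda_1(\Lambda_K) = \min(\sqrt{W_1},\sqrt{N_v})$, realized by some $b_1 \in \mathscr{S}$.

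For $\lambda_2$ I would split on which class realizes $\lambda_1$. If $W_1 \geq N_v$, the four $v$-type vectors simultaneously realize $\lambda_1$ and are pairwise linearly independent by Lemma~\ref{lemma:typeIV-pairwise-lin-ind}, so $\lambda_2 = \lambda_1$ is realized in $\mathscr{S}$ as well. If $W_1 < N_v$, then $\lambda_1$ is realized only by integer multiples of $\hat{w}_1$ (i.e.\ $u_2=u_3=0$), so $\lambda_2$ is the minimum of $\norm{x}$ over those $x$ with $u_2 \neq 0$ or $u_3 \neq 0$. In the all-even subcase, $u_3 \neq 0$ forces $\norm{x}^2 \geq W_3 > W_2$, while $u_3 = 0$ with $u_2 \neq 0$ yields $\norm{x}^2 \geq W_2$, with equality at $(0,\pm 2,0) = \pm\hat{w}_2$; in the all-odd subcase the minimum remains $N_v$, attained at the same $v$-type vectors. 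Hence $\lambda_2 = \min(\sqrt{W_2},\sqrt{N_v})$, again realized in $\mathscr{S}$.

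The main obstacle is the bookkeeping in the case $W_1 < N_v$: one must rule out non-$\mathscr{S}$ candidates such as $\pm\hat{w}_3$, $\hat{w}_i \pm \hat{w}_j$ for $i \neq j$, and all-odd vectors with some $|u_i| \geq 3$. These are dispatched by the ordering $W_1 < W_2 < W_3$: an all-even vector with two or more nonzero coordinates has squared norm $\geq W_1+W_2 > W_2$, and an all-odd vector with some $|u_j| \geq 3$ has squared norm $\geq N_v + 2W_1 > N_v$. Linear independence of the chosen $b_1, b_2 \in \mathscr{S}$ is then immediate from Lemma~\ref{lemma:typeIV-pairwise-lin-ind}.
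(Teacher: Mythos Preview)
Your proof is correct and follows essentially the same route as the paper: parametrize $\Lambda_K$ by $(u_1,u_2,u_3)\in C$ via~\eqref{eq:2}, use the parity constraint from Lemmas~\ref{lemma:typeIVC=H}--\ref{lemma:typeIVC=S} to isolate the short candidates, note $\norm{\hat w_3}>\norm{v_1}$, and invoke pairwise linear independence from Lemma~\ref{lemma:typeIV-pairwise-lin-ind}. Your explicit case split on $W_1\lessgtr N_v$ makes the determination of $\lambda_2$ more transparent than the paper's terser argument, but the underlying idea is the same.
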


\begin{proof}
Let $x \in \Lambda_K$, $x \neq 0$. Let $u_1, u_2, u_3$ be as in~\eqref{eq:2}.  Then $\norm{x}^2 = \frac{1}{4}\sum_{i=1}^3 u_i^2 \norm{\hat{w}_i}^2$ since $\hat{w}_i \perp \hat{w}_j$ for all $i \neq j$. We wish to minimize $\norm{x}$. It follows from Lemma~\ref{lemma:typeIVC=H} and Lemma~\ref{lemma:typeIVC=S} that we wish to find the smallest integers $(\abs{u_1},\abs{u_2}, \abs{u_3})$ satisfying $u_1 \equiv u_2 \equiv u_3 \mod 2$. These are given by
\[(1,1,1),\ (2,0,0),\ (0,2,0),\ \text{and }(0,0,2).\]
In particular, $\left(\abs{u_1}, \abs{u_2}, \abs{u_3}\right) = (1,1,1)$ gives $\pm v_1$, $\pm v_1'$, $\pm v_1''$, $\pm v_1'''$. Similarly, $(2,0,0)$, $(0,2,0)$, $(0,0,2)$ give $\pm \hat{w}_1$, $\pm \hat{w}_2$, and $\pm \hat{w}_3$, respectively. Since $\hat{w}_i \perp \hat{w}_j$ for $i \neq j$, $\norm{v_1}^2 = \frac{1}{4}\left(\norm{\hat{w}_1}^2 +  \norm{\hat{w}_2}^2 + \norm{\hat{w}_3}^2\right)$.  The assumption $\norm{\hat{w}_1} < \norm{\hat{w}_2} < \norm{\hat{w}_3}$ implies that
\[\frac{1}{4}\left(\norm{\hat{w}_1}^2 + \norm{\hat{w}_2}^2 + \norm{\hat{w}_3}^2\right) < \frac{3}{4}\norm{\hat{w}_3}^2 < \norm{\hat{w}_3}^2.\]
Note that $\norm{v_1} = \norm{v_1'} = \norm{v_1''} = \norm{v_1'''}$. 
It follows that $\hat{w}_3$ is longer than every element of $\mathscr{S}$.
By Lemma~\ref{lemma:typeIV-pairwise-lin-ind} any pair of distinct vectors in $\mathscr{S}$ is linearly independent, whence the claim follows.
\end{proof}

\begin{theorem}\label{thm:typeIV-not-orthogonal}
Suppose $K$ is a real biquadratic field of type IV. Then $\Lambda_K$ is not orthogonal.
\end{theorem}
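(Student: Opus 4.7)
The plan is to follow the template of Theorem~\ref{thm:typeIII-not-orthogonal}: suppose for contradiction that $\Lambda_K$ admits an orthogonal basis $\{b_1,b_2,b_3\}$ with $\|b_1\|\le\|b_2\|\le\|b_3\|$. By Proposition~\ref{prop:orthog-lattice-lambda_i}, $\|b_i\|=\lambda_i(\Lambda_K)$, and by Lemma~\ref{lemma:typeIV-shortest-vectors-not-orthogonal}, $\pm b_1,\pm b_2\in\mathscr{S}=\{v_1,v_1',v_1'',v_1''',\hat{w}_1,\hat{w}_2\}$. It therefore suffices to enumerate the orthogonal pairs inside $\mathscr{S}$ and rule each one out as the pair $(b_1,b_2)$.

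A direct inner-product calculation in the orthogonal basis $\{\hat{w}_1,\hat{w}_2,\hat{w}_3\}$ shows that each $\hat{w}_i$ has inner product $\pm\tfrac{1}{2}\|\hat{w}_i\|^2$ with every one of $v_1,v_1',v_1'',v_1'''$, so those pairs are nonorthogonal; while the six inner products among $v_1,v_1',v_1'',v_1'''$ each take the form $\tfrac{1}{4}(\pm\|\hat{w}_1\|^2\pm\|\hat{w}_2\|^2\pm\|\hat{w}_3\|^2)$, and the strict ordering $\|\hat{w}_1\|<\|\hat{w}_2\|<\|\hat{w}_3\|$ makes all such sums nonzero except possibly $\langle v_1,v_1'''\rangle$ and $\langle v_1',v_1''\rangle$, each of which vanishes iff $\|\hat{w}_3\|^2=\|\hat{w}_1\|^2+\|\hat{w}_2\|^2$. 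The only candidate orthogonal pairs $(b_1,b_2)$ are therefore $\{\hat{w}_1,\hat{w}_2\}$ (always) and $\{v_1,v_1'''\}$, $\{v_1',v_1''\}$ (conditionally on that algebraic relation).

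For $\{b_1,b_2\}=\{\hat{w}_1,\hat{w}_2\}$, orthogonality of $b_3$ to both forces $b_3\in\R\hat{w}_3$; writing $b_3=a_1v_1+a_2v_2+a_3v_3$ and matching $\hat{w}$-coefficients forces $a_1=a_2=0$, so $b_3=a_3\hat{w}_3$. Then $\Lambda(B)\subseteq\Lambda(\hat{w}_1,\hat{w}_2,\hat{w}_3)$, a proper sublattice of $\Lambda_K$ of index $2$ (because $v_1$ has half-integer $\hat{w}$-coordinates), contradicting $\Lambda(B)=\Lambda_K$. For the conditional pair $\{v_1,v_1'''\}$, the identities $v_1+v_1'''=\hat{w}_1+\hat{w}_2$ and $v_1-v_1'''=\hat{w}_3$ convert $b_3\perp b_1,b_2$ into $b_3\perp\hat{w}_3$ and $b_3\perp\hat{w}_1+\hat{w}_2$. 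Expanding $b_3=\sum a_iv_i$ in the $\hat{w}$-basis and imposing these two constraints yields $a_1=-2a_3$ together with a relation of the form $(a_2-a_3)\|\hat{w}_2\|^2=a_3\|\hat{w}_1\|^2$; since $\|\hat{w}_1\|^2/\|\hat{w}_2\|^2$ is transcendental by Lemma~\ref{lemma:wi-orthogonal-lengths}(b), we must have $a_3=0$, whence $a_2=a_1=0$ and $b_3=0$, a contradiction. The pair $\{v_1',v_1''\}$ is handled symmetrically.

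The main obstacle is the algebraic condition $\|\hat{w}_3\|^2=\|\hat{w}_1\|^2+\|\hat{w}_2\|^2$, which Lemma~\ref{lemma:wi-orthogonal-lengths} cannot directly exclude because that lemma addresses only linear (not quadratic) relations among the $\log|\epsilon_i|$. We sidestep this by not attempting to exclude the condition at all; even when it holds, the $\Z$-rigidity of the $v_i$-expansion of $b_3$ combined with transcendence of $\|\hat{w}_i\|^2/\|\hat{w}_j\|^2$ already forces $b_3=0$.
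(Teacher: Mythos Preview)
Your proof is correct and essentially identical to the paper's: both reduce via Proposition~\ref{prop:orthog-lattice-lambda_i} and Lemma~\ref{lemma:typeIV-shortest-vectors-not-orthogonal} to the three candidate orthogonal pairs $\{\hat{w}_1,\hat{w}_2\}$, $\{v_1,v_1'''\}$, $\{v_1',v_1''\}$ in $\mathscr{S}$, and eliminate each by showing that the required third basis vector $b_3$ either generates too small a lattice or is forced, via the transcendence of $\|\hat{w}_i\|^2/\|\hat{w}_j\|^2$ from Lemma~\ref{lemma:wi-orthogonal-lengths}(b), to be zero (equivalently, to lie outside $\Lambda_K$). The only difference is cosmetic: you track $b_3$ through its integer $(v_1,v_2,v_3)$-coordinates while the paper works directly in $\hat{w}$-coordinates, and you make explicit the Pythagorean condition $\|\hat{w}_3\|^2=\|\hat{w}_1\|^2+\|\hat{w}_2\|^2$ governing the latter two pairs, which the paper leaves implicit under ``possible exception.''
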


\begin{proof}
    Suppose otherwise that $\Lambda_K$ has orthogonal basis $B=\set{b_1,b_2,b_3}$ with $\norm{b_1} \leq \norm{b_2} \leq \norm{b_3}$. Then by Proposition~\ref{prop:orthog-lattice-lambda_i}, $\lambda_i(\Lambda_K)=\norm{\pm b_i}$, and by Lemma~\ref{lemma:typeIV-shortest-vectors-not-orthogonal},
    \[\pm b_1, \pm b_2 \in \mathscr{S} = \{v_1, v'_1, v''_1, v'''_1, \hat{w}_1, \hat{w}_2\}.\]
    Since $\norm{\hat{w}_1} < \norm{\hat{w}_2} < \norm{\hat{w}_3}$, an easy calculation shows that no pair of vectors from $\mathscr{S}$ is orthogonal, with the possible exception of
    \begin{align*}
      \hat{w}_1, \hat{w}_2; \\
      v_1, v_1'''; \textrm{ and} \\
      v_1', v_1''.
    \end{align*}
    If $b_1 = \pm \hat{w}_1, b_2 = \pm \hat{w}_2$, then by orthogonality of $B$, $b_3 = \pm \hat{w}_3$. But then $\Lambda_K \neq \Lambda(b_1,b_2,b_3)$.

    Suppose that $b_1,b_2 = \pm v_1, \pm v_1'''$; as $\norm{v_1} = \norm{v_1'''}$, the order does not matter. The orthogonality of $B$ implies that
    \[
      b_3 = a \hat{w}_1 - b\hat{w}_2
    \]
    where, if $c = \frac{a}{b}$, 
    \begin{align*}
      c &= \frac{\norm{\hat{w}_2}^2}{\norm{\hat{w}_1}^2} \\
      &= \frac{(\log |\epsilon_i|)^2}{(\log |\epsilon_j|)^2}
    \end{align*}
    for some choice of $i,j$ satisfying $1 \leq i,j \leq 3, i \neq j$. By Lemma~\ref{lemma:wi-orthogonal-lengths}, $c \notin \Q$. But then $b_3 \notin \Lambda_K$, from which we obtain a contradiction.

    The case of $v_1', v_1''$ is similar. The theorem follows.      
\end{proof}

\section{Cubic cyclic lattices}
\label{sec:cubic-cycl-latt}

Define a rank $2$ lattice to be \emph{equilateral triangular} if it has a basis $v_1, v_2$ for which $\norm{v_1} = \norm{v_2}$ and the angle between $v_1$ and $v_2$ is $\pi/3$. The prototypical equilateral triangular lattice is $\Lambda((1,0), (\frac{1}{2}, \frac{\sqrt{3}}{2})) \subset \R^2$. Note that all equilateral triangular lattices are similar to each other.
\begin{theorem}\label{thm:cubic-cyclic}
  If $K$ is a cubic Galois extension of $\Q$, then the log-unit lattice of $K$ is equilateral triangular.
\end{theorem}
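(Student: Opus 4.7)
The plan is to exploit the Galois action on $\Lambda_K$ to force a three-fold rotational symmetry, and then to show that any rank-$2$ lattice with such symmetry is equilateral triangular. First, $K$ is totally real: since $[K:\Q]=3$ is odd and $K/\Q$ is Galois, $K$ cannot admit complex embeddings (which would come in conjugate pairs), so all three embeddings $\iota_1,\iota_2,\iota_3\colon K\to\R$ are real. Thus $\Lambda_K$ has rank $2$ and lies in the hyperplane $H=\{x_1+x_2+x_3=0\}\subset\R^3$. Letting $\sigma$ generate $\mathrm{Gal}(K/\Q)$, the action of $\sigma$ on $\R^3$ induced by permutation of the $\iota_i$ is an orthogonal transformation that fixes the line $\R\cdot(1,1,1)$ pointwise, and hence acts on its orthogonal complement $H$ as a rotation through angle $\pm 2\pi/3$. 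Since the log map is Galois-equivariant, $\Lambda_K\subset H$ is preserved by this rotation.

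Next I would choose a shortest nonzero vector $v_1\in\Lambda_K$ and set $v_2 = v_1+\sigma v_1\in\Lambda_K$. For any $v\in\R^3$ the vector $(1+\sigma+\sigma^2)v$ is $\sigma$-fixed, hence lies in $\R\cdot(1,1,1)$; for $v\in H$ this intersection is trivial, so $1+\sigma+\sigma^2$ annihilates $H$. Consequently $v_2=-\sigma^2 v_1$ and therefore $\|v_2\|=\|v_1\|$. A short calculation using $\langle v_1,\sigma v_1\rangle = \|v_1\|^2\cos(2\pi/3) = -\tfrac{1}{2}\|v_1\|^2$ yields $\langle v_1,v_2\rangle = \tfrac{1}{2}\|v_1\|^2$, so the angle between $v_1$ and $v_2$ is $\pi/3$. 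Hence $v_1,v_2$ are linearly independent and span an equilateral triangular sublattice $\Lambda':=\Z v_1+\Z v_2\subseteq\Lambda_K$.

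The remaining step, which I expect to be the main point of care, is showing $\Lambda_K=\Lambda'$. For this I would use a covering-radius argument: the Voronoi cell of an equilateral triangular lattice of side length $s:=\|v_1\|$ is a regular hexagon of circumradius $s/\sqrt{3}$, or equivalently the fundamental rhombus with vertices $0,v_1,v_2,v_1+v_2$ decomposes into two equilateral triangles each of circumradius $s/\sqrt{3}$. Thus every point of $H$ lies within distance $s/\sqrt{3}<s$ of some point of $\Lambda'$. Given $u\in\Lambda_K$, choose $v\in\Lambda'$ with $\|u-v\|\leq s/\sqrt{3}$; then $u-v\in\Lambda_K$ has norm strictly less than $\lambda_1(\Lambda_K)=s$, forcing $u=v\in\Lambda'$. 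After the Galois-symmetry observation and the choice of a shortest vector, the rest of the argument is essentially forced; the only mild subtlety is the covering-radius estimate, which is standard for equilateral triangular lattices.
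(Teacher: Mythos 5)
Your proposal is correct and follows essentially the same route as the paper: both use the Galois action (combined with negation) to produce a $\pi/3$-rotation preserving $\Lambda_K$, take a shortest vector $v_1$ together with its rotate $v_2$, and then rule out any further lattice points by showing every point of the fundamental rhombus is within distance less than $\|v_1\|$ of a vertex. The only cosmetic differences are that you realize the rotate algebraically as $v_2=(1+\sigma)v_1=-\sigma^2 v_1$ and invoke the covering radius $s/\sqrt{3}$, whereas the paper covers the rhombus by two circles of radius $s$ centered at $0$ and $v_1+v_2$; both versions of the final step are valid.
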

A consequence of the theorem is that all such log-unit lattices are similar---that is, if $K, L$ are two cubic Galois extensions of $\Q$, then $\Lambda_K$ is isometric to $c\Lambda_L$ for some constant $c$.

The above result is reminiscent of results of~\cite{terr} (esp. Prop.~6.11), \cite{bhargava-harron2016}, and~\cite{1908.03969}, which instead describe the shape of the ring of integers embedded as a lattice under the Minkowski embedding.

\begin{proof}
  The Galois group of $K/\Q$ must be isomorphic to $\Z/3\Z$, and $K$ is a totally real field. By Dirichlet's Unit Theorem, $\Lambda_K$ is a rank $2$ lattice lying in the hyperplane $H \subset \R^3$ defined by $x_1 + x_2 + x_3 = 0$. Fix an embedding of $K$ in $\R$, and let $\sigma$ be a generator of the Galois group of $K/\Q$. Then $\Lambda_K$ is the image of
\begin{align*}
  \Log: \okx &\longrightarrow \R^3 \\
  \epsilon &\mapsto (\log |\epsilon|, \log |\sigma(\epsilon)|, \log |\sigma^2(\epsilon)|).
\end{align*}
The action of the Galois group on $\okx$ induces an action on $\Lambda_K$ via $\sigma(\Log(x)) = \Log(\sigma(x))$. The induced action of $\sigma$ is the same as that induced by rotating $H$ by $2\pi/3$ about the axis passing through the origin and $(1,1,1)$. In particular, $\Lambda_K$ is invariant under this rotation. As $\Lambda_K$ is a group, it is invariant under negation, and hence is invariant under rotation of $H$ by $\pi$. Therefore $\Lambda_K$ is invariant under rotation of $H$ by $\pi/3$.

One can now invoke Proposition 5.4.11, exercise 5.21, and exercise 5.22 in~\cite{artin}. These respectively describe the group of origin-fixing symmetries of a two-dimensional lattice, and show that in the two cases where the latter group has order divisible by $3$, the lattice is equilateral triangular. But we give a direct proof. Let $v_1$ be a shortest nonzero vector of $\Lambda_K$, and let $v_2$ be the image of $v_1$ upon rotating $H$ by $\pi/3$. Certainly $\norm{v_1} = \norm{v_2}$, so it suffices to show that $\Lambda_K = \Lambda(v_1,v_2)$. Suppose not. Let $x \in \Lambda_K$, $x \notin \Lambda(v_1, v_2)$. By translating by an appropriate element of $\Lambda(v_1, v_2)$, we may assume that
\[
  x = \alpha_1 v_1 + \alpha_2 v_2
\]
for some $0 \leq \alpha_1, \alpha_2 < 1$. Thus $x$ lies in the rhombus with vertices $0, v_1, v_2, v_1 + v_2$. Let $r = \norm{v_1} = \norm{v_2}$. Construct two circles of radius $r$ centered at $0$ and at $v_1 + v_2$. Since $x \neq v_1, v_2$, $x$ lies in the interior of one of these circles. If $x$ lies in the circle centered at $0$, then $\norm{x} < r$. If $x$ lies in the circle centered at $v_1 + v_2$, then $\norm{v_1+v_2 - x} < r$. In either case, we have found a nonzero element of $\Lambda_K$ which is shorter than $v_1$, contradicting our choice of $v_1$.
\end{proof}

\bibliographystyle{alphaurl}
\bibliography{refs}
\end{document}